\title{Dirichlet problems for fully nonlinear  equations with "subquadratic" Hamiltonians} 
\author{Isabeau Birindelli \\
Dipartimento di Matematica, Sapienza Universit\`a\  di Roma
\and
  Fran\c{c}oise Demengel\\
  D\'epartement de Math\'ematiques,
Universit\'e\  de Cergy-Pontoise
  \and
   Fabiana  Leoni\\ 
   Dipartimento di Matematica, Sapienza Universit\`a\  di Roma}
\date{}
\newtheorem{theo}{Theorem}[section]
\newtheorem{prop}[theo]{Proposition}
\newtheorem{rema}[theo]{Remark}
\newtheorem{defi}[theo]{Definition}
\newtheorem{lemme}[theo]{Lemma}
\def\R{\mathbb  R}
\def\grad{\nabla}
\begin{document}
\maketitle
\begin{abstract}
For a class of  fully nonlinear equations having second order operators which may be singular or degenerate when the gradient of the solutions vanishes, and  having first order terms with power growth, we prove  the existence and uniqueness of suitably defined viscosity solution of Dirichlet problem and we further show that it is a Lipschitz continuous function.
\smallskip

\emph{2010 Mathematical Subject Classification }: 35J70, 35J75.
\end{abstract}

\section{Contents of the paper}
 In this paper we prove some existence,  uniqueness and Lipschitz regularity  results for  solutions of the  Dirichlet problems associated with a class of fully nonlinear equations, whose principal part is given by second order operators which are singular or degenerate elliptic at the points where the gradient of the solution vanishes. Precisely, let us consider the problem
 \begin{equation}\label{eq1}
\left\{ \begin{array}{c}
-  F( \nabla u,  D^2 u)+ b(x) | \nabla u |^\beta  + \lambda |u|^\alpha u = f \quad \hbox{ in } \ \Omega\\[2ex]
 u = \varphi  \quad \hbox{ on } \ \partial \Omega 
 \end{array}\right.
 \end{equation}
where $\Omega\subset \R^N$ is  a bounded, ${\mathcal C}^2$ open set  and the operator $F$ satisfies the structural assumptions
\begin{itemize}
 \item[(H1)]
 $F:\R^N\setminus \{0\}\times {\mathcal S}_N\to \R$  is a continuous function, ${\mathcal S}_N$ being the set of $N\times N$ symmetric matrices;

\item[(H2)] $F(p,M)$ is homogeneous of degree $\alpha > -1$ wrt $p$, positively homogeneous of degree $1$ wrt $M$,   and satisfies, for some constants $A\geq a>0$,  
\begin{equation}\label{eqF1}
a |p|^\alpha {\rm tr}(N) \leq F(p, M+N)-F(p, M) \leq  A |p|^\alpha {\rm tr}(N)
\end{equation} 
 for any $M, N\in {\mathcal S}_N$, with $N\geq 0$, and, for some $c> 0$,
  \begin{equation}\label{eqF2}|F(p, M)- F(q, M) | \leq c  |M| \, ||p|^\alpha-|q|^\alpha| 
  \end{equation}
   for any $p, q \in \R^N\setminus \{0\}$, and  $M \in {\mathcal S}_N$. 
 \end{itemize}
We further assume that the first order coefficient  $b$ is   Lipschitz continuous,  the forcing term $f$ is bounded and continuous, and the boundary datum $\varphi$ is Lipschitz continuous. Moreover, we will consider the cases $\lambda>0$ and $\beta\in (0, \alpha+2]$. In the case $\alpha=0$, the conditions on $\beta$ reduce to $0<\beta\leq 2$, hence the terminology "subquadratic".
 
   At least in the case $\alpha <0$,   the definition of viscosity solution we adopt must be clarified. We use the definition  firstly  introduced  in \cite{BD1},  which is equivalent to the usual one in the case $\alpha \geq 0$, and allows not to test points where the gradient of the test function is zero, except in the locally constant case, when $\alpha<0$.
    \begin{defi} Let $\Omega $ be an open set in $\R^N$, let $f: \Omega \times \R\to \R$ be a continuous function. An 
  upper (lower) semicontinuous function $u$ a is a subsolution (supersolution) of 
  $$-  F( \nabla u,  D^2 u)+ b(x) | \nabla u |^\beta  =  f (x, u)\quad \hbox{ in } \Omega
  $$
 if for any $x_0\in \Omega$, either $u$ is locally constant around $x_0$ and $f(x_o, u(x_0))\geq 0 (\leq) $, or for any test function $\phi$ of class ${\mathcal C}^2$ around $x_0$ such that $u-\phi$ has a local maximum (minimum) point in $x_0$ and $\nabla \phi (x_0) \neq 0$, then 
   $$ -  F( \nabla \phi (x_0),  D^2 \phi(x_0))+ b(x_0) | \nabla \phi (x_0)|^\beta  \leq  (\geq)f(x_0,u(x_o))\, .$$
A solution is a continuous function which is both a  supersolution and  a subsolution. 
     \end{defi}

 The results of the present paper have been announced in \cite{BDL}, where we consider solutions $u=u_\lambda$ of problem \eqref{eq1} with either $\varphi=0$ or $\varphi=+\infty$ and the behavior of $u_\lambda$ as $\lambda\to 0$ is studied. Clearly, the first step to perform this analysis is a detailed description of the existence, uniqueness and regularity properties of the solutions of problems \eqref{eq1} in the case $\lambda>0$, which is precisely the object of this paper. Our results can be summarized in the following main theorem.
 \begin{theo}\label{main}
 Under the above assumptions, problem \eqref{eq1} has a unique viscosity solution, which is Lipschitz continuous up to the boundary.
 \end{theo}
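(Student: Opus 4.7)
The plan is to obtain Theorem \ref{main} via Perron's method, in four stages: (I) a comparison principle for bounded sub- and supersolutions, which yields uniqueness and is the engine of Perron; (II) construction of Lipschitz sub- and supersolutions attaining the boundary datum, which simultaneously bracket the Perron solution and furnish boundary Lipschitz regularity; (III) Perron's method itself to produce a continuous viscosity solution; (IV) interior Lipschitz regularity via an Ishii--Lions type doubling argument.

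For (I) I would take a bounded subsolution $u$ and supersolution $v$ with $u\leq v$ on $\partial\Omega$, and suppose for contradiction that $M:=\max_{\overline\Omega}(u-v)>0$. I would double the variables with the penalization $\Phi(x,y)=u(x)-v(y)-\tfrac{L}{q}|x-y|^q$, $q>1$ chosen large enough for the ensuing balance. At a maximum point $(x_\epsilon,y_\epsilon)$, the Jensen--Ishii lemma provides jets sharing the common gradient $p_\epsilon=L|x_\epsilon-y_\epsilon|^{q-2}(x_\epsilon-y_\epsilon)$ and matrices $X_\epsilon\leq Y_\epsilon$. Subtracting the viscosity inequalities, the ellipticity \eqref{eqF1} gives $F(p_\epsilon,X_\epsilon)-F(p_\epsilon,Y_\epsilon)\geq 0$; the difference $(b(x_\epsilon)-b(y_\epsilon))|p_\epsilon|^\beta$ is controlled by the Lipschitzianity of $b$ combined with $\beta\leq\alpha+2$ (so that a favorable power of $|x_\epsilon-y_\epsilon|$ remains); and the strict monotonicity of $s\mapsto\lambda|s|^\alpha s$ produces a positive limit $\lambda(|u(\bar x)|^\alpha u(\bar x)-|v(\bar x)|^\alpha v(\bar x))>0$, which is the desired contradiction. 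Care must be taken with the case $p_\epsilon=0$, which is excluded by using the locally-constant alternative in the adopted definition.

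For (II), using the $C^2$ regularity of $\partial\Omega$, I would extend $\varphi$ Lipschitzly to $\overline\Omega$ and then, in a tubular neighbourhood of the boundary, modify by the signed distance function $d(x)=\operatorname{dist}(x,\partial\Omega)$: barriers of the form $w^\pm(x)=\tilde\varphi(x)\pm (Ld(x)-Cd(x)^2)$ are standard candidates. On such a barrier $\nabla w^\pm\sim L\nabla d$ is of order $L$, so the Pucci-type term generated by the $-Cd^2$ correction is of order $L^\alpha C$ by (H2), while the first order contribution is of order $L^\beta$; the condition $\beta\leq\alpha+2$ is precisely what makes the second order term dominate for $L$ large (with $C$ adjusted). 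This yields Lipschitz sub- and supersolutions attaining $\varphi$ on $\partial\Omega$, from which Perron's construction $u(x)=\sup\{w(x):w\text{ subsolution},\ w^-\leq w\leq w^+\}$ produces a viscosity solution that is Lipschitz up to $\partial\Omega$ by comparison with $w^\pm$.

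For the interior Lipschitz estimate (IV), I would perform an Ishii--Lions doubling: on $\overline{B_r(x_0)}\times\overline{B_r(x_0)}$ maximize $\Phi(x,y)=u(x)-u(y)-L|x-y|-M|x-x_0|^2$ and show, by contradiction for $L$ large, that the maximum cannot be positive. After Jensen--Ishii, the common gradient stays bounded away from zero (since it is $\sim L$), so the ellipticity gap produces a strictly negative contribution via the $|p|^\alpha$ factor in (H2), dominating the sub-quadratic first-order term $|p|^\beta$ thanks again to $\beta\leq\alpha+2$. The zeroth-order term provides the remaining sign. The main obstacle I anticipate is stage (I) in the singular regime $\alpha<0$: one must prevent $p_\epsilon\to 0$ in the doubling, handle the blow-up of $F(p,M)$ as $p\to 0$, and carefully calibrate the exponent $q$ so that the ellipticity term, the gradient term of order $|p_\epsilon|^\beta$ and the zeroth-order remainder can be compared in the limit $\epsilon\to 0$.
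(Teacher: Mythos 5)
Your overall architecture (comparison principle, barriers, Perron, Ishii--Lions regularity) matches the paper's, but the logical ordering hides the key difficulty and two of your steps would fail as written. First, in stage (I) the term $(b(x_\epsilon)-b(y_\epsilon))|p_\epsilon|^\beta\le \mathrm{Lip}(b)\,|x_\epsilon-y_\epsilon|\,(L|x_\epsilon-y_\epsilon|^{q-1})^{\beta}$ is \emph{not} controlled by $\beta\le\alpha+2$ alone: the standard doubling only gives $L|x_\epsilon-y_\epsilon|^{q}\to 0$, hence $|x_\epsilon-y_\epsilon|(L|x_\epsilon-y_\epsilon|^{q-1})^{\beta}=(L|x_\epsilon-y_\epsilon|^{q})^{\beta}|x_\epsilon-y_\epsilon|^{1-\beta}$, which blows up for $\beta>1$ (allowed, since $\beta$ may be as large as $\alpha+2>1$). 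The paper's fix is to prove the interior Lipschitz estimate \emph{first}, as an a priori estimate for arbitrary bounded sub/supersolutions (Theorem \ref{theolip}), and to use it inside the comparison proof to deduce that $L|x_\epsilon-y_\epsilon|^{q-1}$ stays bounded; your plan runs the regularity step last and never feeds it into stage (I), so the comparison principle is not actually established for non-constant $b$. Relatedly, for $\alpha<0$ the case $x_\epsilon=y_\epsilon$ (hence $p_\epsilon=0$) is not dispatched by the ``locally constant'' clause of the definition: one needs the argument of Lemma \ref{lemalphaneg}, which perturbs the vertex of the test cone to a nearby point $t_\delta$ so as to produce an admissible test function with nonvanishing gradient, or else relocates the maximum pair to one with $x_\epsilon\neq y_\epsilon$.

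Second, your Ishii--Lions penalization $L|x-y|$ in stage (IV) cannot work: the mechanism of the method is that the Hessian bound from Jensen--Ishii carries a matrix $B$ whose eigenvalue in the direction $\frac{x-y}{|x-y|}$ equals $M\omega''(|x-y|)$, and it is the strict concavity of $\omega$ (the paper takes $\omega(s)=s-\frac{s^{1+\tau}}{2(1+\tau)}$, so $\omega''(s)=-\frac{\tau}{2}s^{\tau-1}\to-\infty$) that produces the large negative eigenvalue of $X+Y$ which, multiplied by $|p|^\alpha$, dominates all other terms. With $\omega(s)=s$ there is no negative eigenvalue and no ellipticity gap; the zeroth-order term plays no role in this estimate. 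Moreover, the paper needs a preliminary H\"older estimate to guarantee $L|\bar x-x_0|^2\le c_r|\bar x-\bar y|^{\gamma}$, without which the discrepancy $|q^x|-|q^y|$ coming from the localization term, and hence $|F(q^x,X)-F(q^y,X)|$ and the $b$-terms, cannot be shown to be of lower order. Your barrier $w^{\pm}=\tilde\varphi\pm(Ld-Cd^2)$ is a workable alternative to the paper's $\log(1+Cd)$ provided you normalize $\|u\|_\infty$ and glue it (as a min/max of super/subsolutions) with a global one built from the $\lambda|u|^\alpha u$ term away from the boundary, as the paper does in Proposition \ref{sursolution}.
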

 
 We recall that the restrictions on the exponent $\beta$ for having existence of solutions already appear in the non singular nor degenerate case $\alpha=0$, when generalized solutions satisfying the boundary conditions in the viscosity sense can be constructed, but loss of boundary conditions may occur, see \cite{BDa, CDLP}.
 
 Theorem \ref{main} is obtained as a consequence of the classical Perron's method, which in turn relies on  the 
 comparison principle given by Theorem \ref{comp}. Observe that when $\alpha <0$ and the gradient of the involved 
 test functions  is $0$, the information on the solution is recovered through the result in Lemma \ref{lemalphaneg}, 
 which is analogous to one used in \cite{BD1} in the "sublinear"  case i.e. $\beta \leq \alpha+1$. Moreover when $b$ 
is not constant it is necessary to check that the gradient of the test functions be uniformly bounded. This comes from 
a priori interior Lipschitz estimates, proved in Theorem \ref{theolip},  which are  of independent interest.  
   
 We prove Lipschitz estimates up to the boundary for solutions of the Dirichlet problem \eqref{eq1}. Our proof follows by the Ishii--Lions technique, see \cite{IL}, which has to be adapted to the present singular/degenerate case. We borrow ideas from \cite{BCI, BDcocv}, and we first prove H\"older estimates and then push the argument  up to the Lipschitz result. Regularity results for degenerate elliptic equations have been obtained also in \cite{IS}, where equations having only a principal term of the form $F(\nabla u, D^2u)$ have been considered. However, we observe that any scaling argument relying on the homogeneity of the operator  is  no applicable in our case when $\beta\neq \alpha+1$, due to the different homogeneities of the terms in the equation.  We also emphasize that  we make no assumptions on the sign of the first  order coefficient $b$, meaning that the estimates we obtain rely only on the ellipticity properties of the second order term, despite its singularity or  degeneracy. The estimates we obtain here are radically different from the local Lipschitz regularity result proved in \cite{BDL}, where only positive hamiltonians with "superlinear" exponent $\beta>\alpha +1$ are considered. In that case, the obtained Lipschitz estimates, which do not depend on the $L^\infty$ norm of the solution, are consequence of the coercivity of the first order term and require that the  forcing term $f$ is Lipschitz continuous.
 
 The Lipschitz estimates are proved in Section  \ref{lip}. In Section \ref{Scomp}, after the construction of sub and supersolutions vanishing on the boundary, we prove the comparison principle and obtain the proof of Theorem \ref{main} in the case $\varphi=0$. A Strong Maximum Principle and Hopf Principle are also included. Finally, the changes   to be done in order to prove Theorem \ref{main} for any boundary datum $\varphi$ are detailed in Section \ref{Nonh}.

\section{Lipschitz estimates.}\label{lip}
The main result in this section is the following Lipschitz type estimate, where we denote by $B_1$ the unit ball in $\R^N$.
     \begin{theo}\label{theolip} Suppose that  $u$ is a  bounded viscosity subsolution of 
      $$- F(\grad u,D^2 u)  + b(x)|\nabla u|^\beta \leq  g\quad \hbox{ in } B_1$$
and $v$ is a  bounded viscosity supersolution of 
$$- F(\nabla v, D^2 v)  + b(x)|\nabla v|^\beta \geq  f\quad \hbox{ in } B_1\, ,$$         
with $f$ and $g$ bounded and  $b$ Lipschitz continuous. 
Then,  for all $r<1$, there exists $c_r$ such that for all $(x,y)\in B_r^2$
        $$u(x)-v(y) \leq \sup_{B_1} (u-v)+ c_r |x-y|.$$
        \end{theo}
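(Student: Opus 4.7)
The plan is to follow the Ishii--Lions doubling-of-variables method, adapted to handle both the degeneracy/singularity of $F$ at $\nabla u=0$ and the fact that $\beta$ may exceed $\alpha+1$. Because we aim for a Lipschitz estimate but the direct choice of a Lipschitz modulus destroys the strict concavity needed to gain ellipticity, I would proceed in two stages: first establish a H\"older estimate with some exponent $\gamma\in(0,1)$, and then bootstrap to Lipschitz using a modulus of the form $\omega(t)=t-\omega_0 t^{1+\tau}$ with small $\tau>0$.

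For each stage, fix $r<r'<1$, set $L=\sup_{B_1}(u-v)$, and consider the penalized function
\[
\Phi(x,y)=u(x)-v(y)-L-M\omega(|x-y|)-\Lambda\bigl(|x-x_0|^2+|y-x_0|^2\bigr)
\]
with $x_0\in B_r$ fixed and $\Lambda$ chosen large enough (depending on $\|u\|_\infty,\|v\|_\infty,r,r'$) so that any positive maximum of $\Phi$ over $\overline{B_{r'}}^2$ is attained at an interior point $(\bar x,\bar y)$ staying in $B_{r'}$. Suppose by contradiction that $\Phi$ does have a positive maximum; I would first check that $\bar x\neq \bar y$, invoking Lemma~\ref{lemalphaneg} exactly in the case $\alpha<0$ to handle the touching situation where the test gradient vanishes. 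Then Ishii's lemma furnishes matrices $X\in\overline{J^{2,+}}u(\bar x)$, $Y\in\overline{J^{2,-}}v(\bar y)$ with the usual $(X,-Y)\le D^2\Psi(\bar x,\bar y)+\varepsilon(D^2\Psi)^2$ inequality, where $\Psi$ denotes the test function. Crucially, the common gradient
\[
p=M\omega'(|\bar x-\bar y|)\frac{\bar x-\bar y}{|\bar x-\bar y|}+O(\Lambda)
\]
is nonzero and comparable to $M\omega'(|\bar x-\bar y|)$, so both viscosity inequalities are legitimately tested.

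Subtracting them gives
\[
F(p,X)-F(p,Y)\le \bigl(b(\bar y)-b(\bar x)\bigr)|p|^\beta+b(\bar x)\bigl(|p_x|^\beta-|p_y|^\beta\bigr)+g(\bar x)-f(\bar y),
\]
up to the (H2)-controlled error $|F(p_x,X)-F(p,X)|$ which is absorbed using \eqref{eqF2}. The core computation, standard for Ishii--Lions, shows that the matrix inequality forces an eigenvalue of $X-Y$ in the direction $(\bar x-\bar y)/|\bar x-\bar y|$ of size $\sim 4M\omega''(|\bar x-\bar y|)$, so by \eqref{eqF1} the left-hand side is bounded above by a large \emph{negative} quantity of order $a|p|^\alpha M\omega''(|\bar x-\bar y|)$. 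On the right-hand side, the Lipschitz continuity of $b$ produces $|b(\bar x)-b(\bar y)|\,|p|^\beta\lesssim |\bar x-\bar y|\,(M\omega')^\beta$, and the term $\Lambda$ gives controlled $O(\Lambda)$ corrections. The balance reduces to comparing $M^{\alpha+1}\omega'(t)^{\alpha}\omega''(t)$ with $M^{\beta}\omega'(t)^{\beta}t$ at $t=|\bar x-\bar y|$, and the assumption $\beta\le\alpha+2$ is precisely what guarantees that for $M$ large (H\"older stage: $\omega(t)=t^\gamma$ with $\gamma$ small enough) or for $\tau$ small and $M$ large (Lipschitz stage), the ellipticity term dominates, producing a contradiction.

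The main obstacle, and the reason for the two-step reduction, is that for the Lipschitz choice $\omega(t)=t-\omega_0 t^{1+\tau}$ one has $|\omega''(t)|\sim t^{\tau-1}$, whose smallness at $t$ close to $0$ only barely beats the first-order term when $\beta=\alpha+2$; this is why the preliminary H\"older estimate is needed to know a priori that $|\bar x-\bar y|$ is small, which in turn makes $|p|\sim M$ so that the first-order term is of order $M^\beta$ while the ellipticity provides $M^{\alpha+1}\,|\bar x-\bar y|^{\tau-1}$, winning as $\tau\to 0^+$ under $\beta\le\alpha+2$. The non-constancy of $b$ is handled solely through its Lipschitz norm and does not require any sign; the localization $\Lambda|x-x_0|^2$ yields the dependence on $r$ in $c_r$ through the blow-up of $\Lambda$ as $r\to 1$.
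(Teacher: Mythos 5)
Your proposal follows essentially the same route as the paper: a two-stage Ishii--Lions argument, first a H\"older estimate with modulus $M|x-y|^\gamma$, then the Lipschitz estimate with the concave modulus $\omega(t)=t-\tfrac{t^{1+\tau}}{2(1+\tau)}$, localization by $\Lambda\bigl(|x-x_0|^2+|y-x_0|^2\bigr)$, the dominant negative eigenvalue of order $M\omega''(|\bar x-\bar y|)\sim -M\tau|\bar x-\bar y|^{\tau-1}$, and control of the first-order terms through $\mathrm{Lip}(b)$ only, with $\beta\le\alpha+2$ entering exactly where you say it does.

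One step is mis-attributed, however, and it is precisely the step for which the preliminary H\"older lemma is invoked. The smallness of $|\bar x-\bar y|$ and the bound $|p|\sim M$ already follow from the choice $M\sim\delta^{-1}$ together with $\Phi(\bar x,\bar y)>0$ and the boundedness of $u,v$; no H\"older estimate is needed for that. What the H\"older estimate actually provides is $\Lambda|\bar x-x_0|^2\le c_r|\bar x-\bar y|^{\gamma}$, hence $|p_x-p_y|=O\bigl(|\bar x-\bar y|^{\gamma/2}\bigr)$. Without this, the error terms $\bigl||p_x|^{\alpha}-|p_y|^{\alpha}\bigr|\,|X|$ and $|b|_\infty\bigl||p_x|^{\beta}-|p_y|^{\beta}\bigr|$ cannot be absorbed, because $|X|\sim M|\bar x-\bar y|^{-1}$ is \emph{larger} than the ellipticity gain $M|\bar x-\bar y|^{\tau-1}$, so the factor $|p_x-p_y|$ must contribute a positive power of $|\bar x-\bar y|$ beating $\tau$; this is why the paper chooses $\gamma$ with $\gamma/2>\tau/\min(1,\alpha)$ for $\alpha>0$ (resp. $\gamma/2>\tau$ for $\alpha\le 0$). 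Your phrase ``the term $\Lambda$ gives controlled $O(\Lambda)$ corrections'' glosses over exactly this difficulty, which is the real reason the argument must be run in two stages.
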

In order to prove Theorem \ref{theolip} we first obtain the following H\"older estimate:
       
\begin{lemme}\label{lem1} Under the hypothesis of Theorem \ref{theolip},  for any    $\gamma \in (0,1)$,   there exists
$c_{r, \gamma}>0$ such that for  all $(x,y)\in B_r^2$
\begin{equation}\label{holdest}
u(x)-v(y) \leq \sup_{B_1} (u-v)+ c_{r, \gamma}|x-y|^\gamma .
        \end{equation} 
\end{lemme}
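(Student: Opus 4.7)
The plan is to argue by contradiction using the Ishii--Lions doubling-of-variables method. Fix $r<1$ and $\gamma\in(0,1)$, and suppose the estimate fails for some large $M>0$: there exist $\tilde x,\tilde y\in B_r$ with
$$u(\tilde x)-v(\tilde y) > \sup_{B_1}(u-v) + M|\tilde x-\tilde y|^\gamma.$$
I would introduce on $\bar B_1\times\bar B_1$ the auxiliary function
$$\Psi(x,y) = u(x)-v(y) - M|x-y|^\gamma - L\bigl(|x-\tilde x|^2+|y-\tilde y|^2\bigr),$$
with $L=L(M,\|u\|_\infty,\|v\|_\infty,r)$ so large that the maximum point $(\bar x,\bar y)$ of $\Psi$ lies in the interior of $B_1^2$. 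The strict inequality $\Psi(\bar x,\bar y)>\sup_{B_1}(u-v)$ forces $\bar x\neq \bar y$, and the direct comparison $\Psi(\bar x,\bar y)\geq \Psi(\bar x,\bar x)$ yields the key control $M|\bar x-\bar y|^\gamma \leq C$, so that $|\bar x-\bar y|\leq CM^{-1/\gamma}$ becomes arbitrarily small as $M\to\infty$.

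Next I would apply Ishii's lemma to produce gradients
$$p_i = M\gamma|\bar x-\bar y|^{\gamma-2}(\bar x-\bar y) + O(L),\qquad i=1,2,$$
and symmetric matrices $X,Y$ satisfying $(p_1,X)\in\overline{J}^{2,+}u(\bar x)$, $(p_2,Y)\in\overline{J}^{2,-}v(\bar y)$, together with the standard matrix inequality coming from the Hessian of $(x,y)\mapsto M|x-y|^\gamma + L(|x-\tilde x|^2+|y-\tilde y|^2)$. Since $p_1,p_2$ are large and nonzero for $M$ large, the usual viscosity inequalities hold and, after subtraction, give
$$F(p_2,Y) - F(p_1,X) + b(\bar x)|p_1|^\beta - b(\bar y)|p_2|^\beta \leq \|f\|_\infty+\|g\|_\infty.$$
I would then decompose the left-hand side as
$$F(p_2,Y)-F(p_1,X) = \bigl[F(p_2,Y)-F(p_1,Y)\bigr] + \bigl[F(p_1,Y)-F(p_1,X)\bigr],$$
control the first bracket by (\ref{eqF2}) using $|p_1-p_2|=O(L)$, and bound the second from below via (\ref{eqF1}) and the matrix inequality. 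In the direction $\hat e=(\bar x-\bar y)/|\bar x-\bar y|$ the strict concavity of $t\mapsto t^\gamma$ produces an eigenvalue of $X-Y$ of size $-\gamma(1-\gamma)M|\bar x-\bar y|^{\gamma-2}$, and a Pucci-type computation then yields a positive contribution of order $aM^{\alpha+1}|\bar x-\bar y|^{(\alpha+1)(\gamma-1)-1}$, minus controlled $O(L)$ errors.

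The main obstacle lies in the delicate balance of exponents against the first-order term, whose size, using the Lipschitz continuity of $b$ and $|p_i|\sim M|\bar x-\bar y|^{\gamma-1}$, is $O\bigl(M^\beta|\bar x-\bar y|^{\beta(\gamma-1)}\bigr)$. Inserting the bound $|\bar x-\bar y|\leq CM^{-1/\gamma}$, the good and bad terms scale respectively as $M^{(\alpha+2)/\gamma}$ and $M^{\beta/\gamma}$; since $\beta\leq \alpha+2$, the ellipticity dominates for $M$ large, strictly when $\beta<\alpha+2$, and in the borderline case $\beta=\alpha+2$ by a careful choice of $L=L(M)$ and a more refined tracking of the structural constants (or, equivalently, by replacing $|x-y|^\gamma$ with a suitable modulus $\omega(|x-y|)$ satisfying $\omega(t)\sim t^\gamma$ near the origin but with $|\omega''|$ controllably large). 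This yields the required contradiction and proves the stated estimate with $c_{r,\gamma}=M$ depending on $r$, $\gamma$, the structural constants $a,A,\alpha,\beta$, and the norms $\|u\|_\infty,\|v\|_\infty,\|b\|_{\Lip},\|f\|_\infty,\|g\|_\infty$.
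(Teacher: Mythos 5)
Your strategy coincides with the paper's: doubling of variables with the penalization $M|x-y|^{\gamma}$ plus a quadratic localization, Ishii's lemma, and the Ishii--Lions eigenvalue argument yielding the ellipticity term of order $M^{\alpha+1}|\bar x-\bar y|^{(\alpha+1)(\gamma-1)-1}$; your scaling count correctly closes the argument when $\beta<\alpha+2$. The gap is at the endpoint $\beta=\alpha+2$, which is included in the hypotheses. You bound the first-order contribution by its brute-force size $O\bigl(|b|_\infty M^{\beta}|\bar x-\bar y|^{\beta(\gamma-1)}\bigr)$, and, as your own computation shows, at $\beta=\alpha+2$ this is of exactly the same order $M^{(\alpha+2)/\gamma}$ as the good term, with a prefactor proportional to $|b|_\infty\|v\|_\infty/a$ that cannot be made small. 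Neither of your proposed repairs works: tuning $L=L(M)$ only affects bounded additive corrections, and replacing $|x-y|^{\gamma}$ by a modulus $\omega$ does not help because both the good term ($\sim M^{\alpha+1}(\omega')^{\alpha}|\omega''|$) and the bad term ($\sim M^{\beta}(\omega')^{\beta}$) are driven by the same quantity $M\omega'$, so their ratio at $\beta=\alpha+2$ is $\sim M(\omega')^2/|\omega''|\sim M\omega$, which the constraint $M\omega(|\bar x-\bar y|)\le 2\|v\|_\infty$ only bounds by a fixed, not small, constant.

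What is actually needed --- and what the paper does --- is to exploit the cancellation between $b(\bar x)|q^x|^{\beta}$ and $b(\bar y)|q^y|^{\beta}$ rather than bounding each separately (you already use the analogous cancellation for the second-order term via \eqref{eqF2}, so the ingredient is available to you). Since $q^x-q^y=2L(\bar x-x_o)+2L(\bar y-x_o)=O(L)$ is bounded independently of $M$, one has $\bigl||q^x|^{\beta}-|q^y|^{\beta}\bigr|\le C L\,|q^x|^{\beta-1}$ for $\beta\ge 1$ (and $\le C L^{\beta}$ for $\beta<1$), which gains a full factor $|q^x|^{-1}$ over the brute-force bound; the remaining piece $|b(\bar x)-b(\bar y)|\,|q^x|^{\beta}\le {\rm Lip}(b)\,|\bar x-\bar y|\,|q^x|^{\beta}$ gains a factor $|\bar x-\bar y|$. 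With the normalization $M=\bigl(|u|_\infty+|v|_\infty+1\bigr)\delta^{-\gamma}$ and $|\bar x-\bar y|\le\delta$, both pieces are then $O(\delta^{\alpha+3-\beta})$ relative to the ellipticity term, with $\alpha+3-\beta\ge 1>0$, hence negligible for every $\beta\le\alpha+2$ and every $\gamma\in(0,1)$. Inserting this decomposition in place of your crude bound closes the proof.
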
        
\begin{proof}[Proof of Lemma \ref{lem1}]
We borrow ideas from \cite{IL},  \cite{BCI}, \cite{BDcocv}. 
Fix $x_o \in B_r$, and define 
     $$\phi(x, y) = u(x) - v(y) -\sup_{B_1} ( u-v) -M|x-y|^\gamma -L (|x-x_o|^2+ |y-x_o|^2)$$
with $L= {4(|u|_\infty+ |v|_\infty)\over (1-r)^2}$  and $M= {(|u|_\infty+ |v|_\infty)+1\over \delta^\gamma}$,  
$\delta$ will be chosen later small enough depending only on the data and on universal constants.  
We want to prove that $\phi (x,y) \leq 0$ in $B_1$ which will imply the result, 
taking first $x= x_o$ and making $x_o$ vary. 
   
We argue by contradiction and suppose that $\sup_{B_1}  \phi(x, y)>0$. 
By the previous assumptions on $M$ and $L$  the supremum is achieved on $(\bar x, \bar y)$  which  belongs to  
$B_{1+r\over 2} ^2$ and  it is such that $0<|\bar x-\bar y| \leq \delta$. 
   
By Ishii's Lemma \cite{I1},  for all $\epsilon >0$ there   exist  $X_\epsilon $  and $Y_\epsilon $ in $S$  such that 
$ (q^x, X_\epsilon ) \in J^{2,+} u(\bar x), (q^y, -Y_\epsilon ) \in J^{2,-}v(\bar y)$
with 
           $$ q^x = \gamma M |\bar x-\bar y|^{\gamma-2} (\bar x-\bar y) +2 L (\bar x-x_o)$$
           $$q^y = \gamma M |\bar x-\bar y|^{\gamma-2} (\bar x-\bar y) -2L (\bar y-x_o).$$
Hence
\begin{equation}\label{equaz}- F(q^x,X_\epsilon)  + b(\bar x)|q^x|^\beta \leq  g(\bar x),\quad           - F(q^y,-Y_\epsilon)  + b(\bar y)|q^y|^\beta \geq  f(\bar y)
\end{equation}
Furthermore,            
\begin{eqnarray}\label{eqXYB}
             -({1\over \epsilon } +|B|) \left( \begin{array}{cc}
             I&0\\0& I\end{array} \right) &\leq& 
            \left( \begin{array}{cc} 
             X_\epsilon&0\\
             0& Y_\epsilon\end{array}\right) 
              -2L \left( \begin{array}{cc} 
             I&0\\
             0& I\end{array}\right)\\
 &&\leq\quad\quad \left( \begin{array}{cc} 
             B +2 \epsilon B^2&-B-2 \epsilon B^2\\
             -B-2 \epsilon B^2& B+2 \epsilon B^2\end{array}\right)\nonumber   \end{eqnarray}
with $B = M\gamma |\bar x- \bar y |^{\gamma-2} \left( I-(2-\gamma) {(\bar x -\bar y) \otimes ( \bar x-\bar y)\over |\bar x -\bar y|^2}\right)$. 
 It is immediate to see that,  as soon as $\delta$ is small enough, with our choice of $M$ and $L$, 
 there exists $c_1$ and $c_2$, such that
$$c_1\delta^{-\gamma}  | \bar x-\bar y|^{ \gamma-1}\leq |q^x|, |q^y | \leq c_2\delta^{-\gamma}  | \bar x-\bar y|^{ \gamma-1}.$$  
We take $\epsilon = {1 \over 8 |B| + 1} = {1\over 8M \gamma |\bar x-\bar y |^{\gamma-2}  (N-\gamma)+1}$ and drop 
the index $\epsilon $ for $X_\epsilon$ and $Y_\epsilon$.  By standard considerations on the eigenvalues of $B$, in particular note that $B+ \epsilon B^2$ has a negative eigenvalue less than  
$-{3\gamma(1-\gamma)\over 4} M|\bar x-\bar y |^{\gamma-2}$, and, using \eqref{eqXYB}, the following holds
$$ X+ Y \leq (2L+\epsilon ) I\ \mbox{ and }\ \inf\lambda_i(X+Y)\leq 4 \inf \lambda_i ( B + \epsilon B^2) \leq -3\gamma(1-\gamma)M|\bar x-\bar y |^{\gamma-2}.$$
Hence, as 
soon as $\delta$ is small enough,   for some constant $c$ depending only on $a$, $A$, $\gamma$ and $N$,   
\begin{equation}\label{181}
\begin{array}{lll}
              F(q^x,X)-F  (  q^x, -Y) &\leq&|q^x|^\alpha\left(  A  \sum _{\lambda_i >0} \lambda_i (X+Y) + a \sum_{ \lambda_i <0} \lambda_i (X+Y)\right)\\[2ex]
              &\leq& |q^x |^\alpha \left( 2AN (2L+1)
               - 3a M \gamma (1-\gamma)\right) |\bar x-\bar y |^{\gamma-2} \\[2ex]
               &\leq & -c \delta^{-\gamma(\alpha+1)} | \bar x -\bar y |^{\gamma(\alpha+1) -(\alpha+2)}.
               \end{array}
               \end{equation}
And the following standard inequalities hold: 
$$ \mbox{If } \ 0\leq \alpha <1,\  ||q^y|^\alpha   -|q^x|^\alpha | \leq |q^x-q^y|^\alpha\leq  cL^\alpha$$
$$ \mbox{If } \alpha \geq 1, \  | |q^y|^\alpha -|q^x|^\alpha | \leq |\alpha|  |q^x-q^y| ( |q^x| + |q^y| )^{\alpha-1}\leq c(\delta^{-\gamma}|\bar x-\bar y|^{(\gamma-1)})^{(\alpha-1)}$$
\begin{equation*}\begin{split} \mbox{If }  -1< \alpha<0 ,\  | |q^y|^\alpha -|q^x|^\alpha | \leq &\  |\alpha|  |q^x-q^y| \inf (|q^x|,  |q^y| )^{\alpha-1} \\
\leq& \ c(\delta^{-\gamma}|\bar x-\bar y|^{(\gamma-1)})^{(\alpha-1)}\end{split}
               \end{equation*}
This implies that,  for any $\alpha>-1$,
\begin{equation}\label{182}
| F(q^x, X) -F(q^y, X) | \leq c\max(\delta^{-\gamma} |\bar x-\bar y |^{\gamma-2}, \delta^{-\alpha\gamma}|\bar x-\bar y |^{\alpha\gamma-\alpha-1}).
\end{equation}
Next, we  need to evaluate $|b(\bar x) |q^x|^\beta -b(\bar y)|q^y|^\beta |$.  One easily has,   for some constant which depends only on $\beta $ and universal constants, 
\begin{eqnarray} \label{183}
|b(\bar x) |q^x|^\beta -b(\bar y)|q^y|^\beta |  &\leq& {\rm Lip } \ b\  c\delta^{-\gamma\beta} |\bar x-\bar y |^{\gamma\beta-\beta+1} \\
\nonumber&&+\ c |b |_\infty\max(1, \delta^{ -\gamma(\beta-1)} |\bar x-\bar y |^{ \gamma(\beta-1)-\beta+1}).
\end{eqnarray}
 Observe that choosing $\delta$ small, the terms in \eqref{182} and in \eqref{183} are of lower order with respect to
\eqref{181}. 
We then have, for  some constant $c$,  subtracting the inequalities in   \eqref{equaz},
              \begin{eqnarray*}
              -g(\bar x)  &\leq &  F(q^x,X) -b(\bar x) |q^x|^\beta  \\
              &\leq &    F(q^y,Y)   - c M^{1+ \alpha}  |\bar x-\bar y|^{\gamma-2+ ( \gamma-1) \alpha}- b ( \bar y) |q^y|^\beta  \\
              &\leq &-f(\bar y) - c \delta^{-\gamma(1+ \alpha)}  |\bar x-\bar y|^{\gamma(\alpha+1)-(2+\alpha)}
              \end{eqnarray*}
This is a contradiction with the fact that $f$ and $g$ are bounded, as soon as $\delta$ is small enough.  
\end{proof}
 \medskip                 
               
We are now in a position to prove   the  Lipschitz estimate. The proof proceeds analogously to the H\"older estimate
above, but with a modification in the term depending on  $|x-y|$ in the function $\phi(x,y)$.
\begin{proof}[Proof of  Theorem \ref{theolip}]                     
For fixed $\tau\in (0,\frac{1}{2})$ for $\alpha\leq 0$ and $\tau\in (0,{\inf (1, \alpha) \over 2}))$ for $\alpha>0$, and $s_o=(1+ \tau)^{\frac{1}{\tau}}$ we define
for $s\in (0,s_o)$ with \begin{equation}
 \label{omega}
 \omega(s)  = s-{s^{1+\tau}\over 2(1+ \tau)},
 \end{equation} 
which we extend continuously after $s_o$ by a constant. 
 
Note that $\omega(s) $ is  ${\cal C}^2$ on $s>0$,  $s< s_o$ and satisfies 
 $\omega^\prime >0$, $\omega^{\prime \prime} <0$  on $]0,1[$, and 
 $s>\omega (s) \geq {s\over 2}$.

As before in the H\"older case, with $L = \frac{ 4(|u|_\infty + |v|_\infty)+1}{(1-r)^2}$ and $M = \frac{(|u|_\infty+ |v|_\infty) +1}{\delta}$  we define  
$$\phi(x,y)= u(x) - v(y) -\sup_{B_1} (u-v) -M\omega(|x-y|) -L (|x-x_o|^2+ |y-x_o|^2).$$  
 Classically, as before, we suppose that there exists a maximum point $(\bar x, \bar y)$ such that $\phi(\bar x,\bar y)>0$, then by the assumptions on $M$, and $L$,  $\bar x, \bar y$ belong to $B( x_o, {1-r \over 2})$, hence they are interior points. 
This implies, using \eqref{holdest} in Lemma \ref{lem1} with $\gamma <1$ such that  $\frac{\gamma}{2} >  {\tau\over \inf (1,\alpha)}$ for $\alpha>0$ and $\frac{\gamma}{2} > \tau$ when $\alpha\leq 0$ that,  for some constant  $c_r$,

\begin{equation}\label{hh} L|\bar x-x_o|^2 \leq c_r  |\bar x-\bar y |^{\gamma}.\end{equation}
and then  one has 
$|\bar x-x_o | \leq \left({c_r  \over L}\right)^{1\over 2}  |\bar x-\bar y |^{\gamma\over 2}$.

Furthermore, for all $\epsilon >0$, 
 there   exist  $X_\epsilon $  and $Y_\epsilon $ in $S$  such that 
          $ (q^x, X_\epsilon ) \in J^{2,+} u(\bar x)$, $(q^y, -Y_\epsilon ) \in J^{2,-}v(\bar y)$ with
 $$ q^x= M \omega^\prime (|\bar x-\bar y|) {\bar x-\bar y\over |\bar x-\bar y|}  + L (\bar x-x_o), \ q^y= M \omega^\prime (|\bar x-\bar y|) {\bar x-\bar y\over |\bar x-\bar y|}  - L (\bar y-x_o).$$
 While $X_\epsilon$ and $Y_\epsilon$ satisfy \eqref{eqXYB} with 
 $$ B =M\left( {\omega^\prime (|\bar x-\bar y|) \over |\bar x-\bar y|} ( I-{ \bar x-\bar y \otimes \bar x-\bar y \over |\bar x-\bar y |^2}) + \omega^{\prime \prime } (|\bar x-\bar y|) { \bar x-\bar y \otimes \bar x-\bar y \over |\bar x-\bar y |^2}\right).$$
Note that as soon as $\delta$ is small enough 
 $ {M \over 2} \leq |q^x|, |q^y |\leq {3M \over 2} $. Also,  
 $M\omega^{\prime \prime} ( |\bar x-\bar y|) = -M {\tau\over 2} |\bar x-\bar y |^{\tau-1}$ is an eigenvalue of
 $B$ which is large negative as soon as $\delta$ is small enough. 

Taking $\epsilon = {1\over 8 |B|+1}$ and dropping the $\epsilon$ in the notations
arguing as in  the above proof, we get that there exists some constant $c$ such that 
\begin{equation}\label{vpneg}
         F(q^x, X)- F(q^x ,  -Y) \leq -c \delta^{-(1+ \alpha)} |\bar x-\bar y |^{\tau-1} .
\end{equation}
Note that by  (\ref{eqXYB}) using the explicit value of $B$ one has 
           \begin{equation}\label{majX}|X| + |Y| \leq c (M |\bar x-\bar y |^{-1} + 4(L+1) N) \leq c \delta^{-1}|\bar x -\bar y |^{-1}
           \end{equation}
as soon $\delta$ is small enough. 
            
              
On the other hand, using the mean value theorem, that  for some universal constant 
$$\mbox{ if }\ \alpha \geq 1,\ \mbox{ or }\ \alpha <0, \quad | |q ^x|^\alpha  -|q^y|^\alpha|  \leq  c  \delta^{-\alpha +1} |\bar x-\bar y |^{\gamma\over 2},$$  
while 
$$ \mbox{ if }\ 0< \alpha \leq 1,\  | |q ^x|^\alpha  -|q^y|^\alpha | \leq  c |\bar x-\bar y |^{\gamma\alpha \over 2}.$$ 
In each of these cases  one easily obtains, using ( \ref{majX})
               
\begin{eqnarray*}
 | |q ^x|^\alpha  -|q^y|^\alpha |\,|X|&\leq&   c \delta^{-1- \alpha} |\bar x-\bar y |^{-1+ {\gamma\over 2}}\ {\rm if } \ \alpha \leq 0, \ {\rm or}\  \alpha >1,\\
| |q ^x|^\alpha  -|q^y|^\alpha| \, |X|&\leq&   c\delta^{-1} |\bar x-\bar y |^{-1+ {\gamma\alpha \over 2}}\ { \rm if  } \ \alpha \in ]0,1[
\end{eqnarray*}
%
%

We have obtained, with the above choice of $\gamma$ that , as soon as $\delta$ is small enough,
                $|F(q^x, X)-F(q^y, X) | $ is small with respect to  
$c\delta^{-1- \alpha} | \bar x-\bar y |^{\tau-1} .$
We now treat the terms involving $b$ with similar considerations.  
If $\beta \geq 1$,          
         $$|b(\bar x)| ||q^x|^{\beta } - |q^y |^{\beta } | \leq | b|_\infty  |q^x-q^y| M^{\beta -1} \leq c\delta^{-\beta+1}| \bar x-\bar y |^{ \gamma \over 2},$$ 
         while if  $ \beta \leq 1$, 
$$ |b(\bar x)| ||q^x|^{\beta } - |q^y |^{\beta } | \leq | b|_\infty  c | \bar x-\bar y |^{ \gamma \beta \over 2}.$$
         
          
Observe also that
$$|b(\bar x)-b(\bar y) ||q^x|^\beta \leq c\ {\rm lip }\  b  |\bar x-\bar y| \delta^{-\beta}.$$
Finally, 
\begin{eqnarray*}   
|b(\bar x)-b(\bar y) ||q^x|^\beta &\leq& c|\bar x-\bar y|^{\tau-1}\delta^{-\beta+2-\tau}\\
&=& c|\bar x-\bar y|^{\tau-1}\delta^{-1-\alpha}\delta^{-\beta+3-\tau+\alpha}
\end{eqnarray*}         
Since  $3+ \alpha-\beta -\tau>0$ , this term is  also small wrt to  $\delta^{-1- \alpha} |\bar x-\bar y |^{\tau-1}$.

Putting all the estimates together we get
               \begin{eqnarray*}
              - g(\bar x)&\leq&  F(q^x, X) -b(\bar x) |q^x |^\beta \\
               &\leq &  F(q^y, -Y) - b( \bar y) |q^y |^\beta - c\delta^{-1-\alpha} |\bar x-\bar y |^{-1+ \tau} \\
               &\leq & -f( \bar y)-c\delta^{-1-\alpha} |\bar x-\bar y |^{-1+ \tau}.
               \end{eqnarray*}
This is clearly a contradiction as soon as $\delta$ is small enough since $f$ and $g$ are bounded. 
 \end{proof}

\section{Existence and uniqueness results for homogenous Dirichlet conditions.}\label{Scomp}
   
 The existence of solutions for (\ref{eq1}) with the boundary condition $\varphi = 0$ will be 
 classically obtained as a  consequence of  the existence of sub- and supersolutions,  
of some comparison result, and the Perron's  method. \\
We start with  a result on the existence of  sub and supersolutions of equations involving the Pucci's operators
$$
\begin{array}{c}
\mathcal{M}^+ (M)= A \sum_{\lambda_i>0} \lambda_i +a \sum_{\lambda_i<0} \lambda_i \\[2ex]
\mathcal{M}^- (M)= a \sum_{\lambda_i>0} \lambda_i +A \sum_{\lambda_i<0} \lambda_i 
\end{array}$$
defined for any $M\in \mathcal{S}_N$, $M\sim {\rm diag}(\lambda_1,\ldots ,\lambda_N)$.
 \begin{prop}\label{sursolution}
         Let $\lambda >0$ and $b>0$  be given.  For all $M>0$,   there exists a continuous function $\varphi \geq 0$ supersolution of 
\begin{equation}\label{soprasol}
         \left\{ \begin{array}{lc}
          - |\nabla\varphi|^\alpha \mathcal{M}^+(D^2 \varphi)   -b  |\nabla \varphi|^{\beta}  + \lambda \varphi^{1+ \alpha} \geq M & {\rm in } \ \Omega \\
           \varphi = 0 & {\rm on } \ \partial \Omega
           \end{array} \right.
           \end{equation}
         and symmetrically,  
         for all $M>0$,  there exists a continuous function $\varphi \leq 0$ subsolution
$$\left\{ \begin{array}{lc}
         -|\nabla\varphi|^\alpha {\cal M}^-(D^2 \varphi) +b  |\nabla \varphi|^{\beta} +  \lambda |\varphi|^{ \alpha} \varphi    \leq - M & {\rm in } \ \Omega \\
           \varphi = 0 & {\rm on } \ \partial \Omega.
           \end{array} \right.$$
 \end{prop}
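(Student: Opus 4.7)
The strategy is to build $\varphi$ as a concave power of the distance to the boundary, capped by a constant in the interior. Since $\Omega$ is of class $\mathcal{C}^2$, the function $d(x)=\mathrm{dist}(x,\partial\Omega)$ is $\mathcal{C}^2$ on a tubular neighborhood $\Omega_{\rho_0}=\{d<\rho_0\}$, with $|\nabla d|\equiv 1$ and $\|D^2 d\|_\infty\le\kappa_0$ for constants depending only on $\partial\Omega$. The plan is to take
\[
\varphi(x)=\min\bigl\{\,K\,d(x)^\gamma,\ K\rho^\gamma\,\bigr\}
\]
for suitable $\gamma\in(0,1)$, $\rho\in(0,\rho_0)$ and $K>0$; this function is continuous, non-negative and vanishes on $\partial\Omega$. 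The subsolution $\varphi\le 0$ is then obtained for free from the supersolution $\psi\ge 0$ via $\varphi=-\psi$, since the identity $\mathcal{M}^-(-A)=-\mathcal{M}^+(A)$ shows that the sub- and super-inequalities of the statement are equivalent under this substitution.

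The supersolution inequality for $\varphi$ splits into three local regimes. On the interior $\{d>\rho\}$, $\varphi$ is locally constant equal to $K\rho^\gamma$, so by the locally-constant clause of Definition 1.1 the inequality reduces to $\lambda(K\rho^\gamma)^{1+\alpha}\ge M$. On the strip $\{0<d<\rho\}$, $\varphi$ is smooth, and since $|\nabla d|=1$ and $D^2 d\cdot\nabla d=0$, the matrix $D^2\varphi$ has the large negative eigenvalue $K\gamma(\gamma-1)d^{\gamma-2}$ along $\nabla d$ together with tangential eigenvalues of size at most $\kappa_0 K\gamma d^{\gamma-1}$. A direct calculation would then yield the pointwise bound
\[
-|\nabla\varphi|^\alpha\mathcal{M}^+(D^2\varphi)-b|\nabla\varphi|^\beta+\lambda\varphi^{1+\alpha}\;\ge\;a(1-\gamma)\gamma^{1+\alpha}K^{1+\alpha}\,d^{e_1}-C\gamma^{1+\alpha}K^{1+\alpha}\,d^{e_2}-b\gamma^\beta K^\beta\,d^{e_3},
\]
with $e_1=(\alpha+1)\gamma-(\alpha+2)$, $e_2=(\alpha+1)(\gamma-1)$, $e_3=\beta(\gamma-1)$ and $C=C(A,N,\kappa_0)$. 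Finally, on the interface $\{d=\rho\}$ the function $\varphi$ attains a local maximum equal to $K\rho^\gamma$ (shared with the whole set $\{d\ge\rho\}$); hence any test function touching $\varphi$ from below at such a point must have zero gradient, and the supersolution condition is vacuously satisfied there.

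The subquadratic assumption $\beta\le\alpha+2$ is precisely what ensures $e_1$ is the smallest of the three exponents: indeed $e_1=e_2-1$ trivially, and $e_1-e_3=(\alpha+1-\beta)\gamma+(\beta-\alpha-2)\le -\gamma$ for every $\gamma\in(0,1)$, with equality exactly at the critical endpoint $\beta=\alpha+2$. The parameters would then be chosen in sequence: fix $\gamma\in(0,1)$, set $K=(M/\lambda)^{1/(1+\alpha)}\rho^{-\gamma}$ so that the interior condition holds with equality, and finally take $\rho$ small enough that the positive leading term exceeds $M$ plus the two negative corrections uniformly on $\{0<d<\rho\}$.

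The main delicate point is the critical case $\beta=\alpha+2$, where the inequality $e_1\le e_3-\gamma$ becomes equality and the leading and $b$-terms share the singularity $d^{-(\alpha+2)}$ after the normalization; their coefficient ratio is then of order $b\gamma(M/\lambda)^{1/(1+\alpha)}/[a(1-\gamma)]$, which forces $\gamma$ to be chosen small depending on $M/\lambda$, $a$ and $b$. This is always possible since the required upper bound on $\gamma$ remains strictly positive for each fixed $M$; away from this endpoint, any fixed $\gamma\in(0,1)$ works uniformly in $M$.
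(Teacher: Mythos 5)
Your construction is correct and follows essentially the same route as the paper: a concave, truncated profile of the distance function (the paper uses $\log(1+Cd)$ capped at a constant where you use $K d^\gamma$), with the normal second derivative supplying the dominant positive term, the capped region handled through the locally-constant clause of the definition, and the subsolution obtained as the negative of the supersolution. The only substantive difference is the treatment of the borderline case $\beta=\alpha+2$: the paper rescales the barrier ($\psi=\varphi/\epsilon$ with $\epsilon$ of order $a/b$) to exploit the homogeneity mismatch between the second and first order terms, while you shrink the exponent $\gamma$ depending on $M/\lambda$, $a$ and $b$; both devices work.
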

 
 \begin{proof}[Proof of Proposition \ref{soprasol}] We shall construct explicitly a positive supersolution, the subsolution is just the negative of it. 
 
Since $\Omega$ is a smooth domain,   $d(x)$, the function distance from the boundary, is ${ \cal C}^2$ in  the 
neighborhood $\{ d (x)<\delta_0\}$ for some $\delta_0>0$, hence we will suppose to extend it to a ${ \cal C}^2$ function in $\Omega$, which is greater than $\delta_0$ outside of a $\delta_o$ neighborhood of the boundary.

Let us fix a positive constant $\kappa$ satisfying $\lambda \log (1 + \kappa)^{1+ \alpha} >  M$ and, for $C>\frac{2\kappa}{\delta_0}$, consider the function $\varphi (x)=  \log (1+ C d(x))$.
Observe that $|\nabla d | = 1$ in $\{d(x)<\delta_0\}$.

Suppose that   $\varphi$ is showed to be a supersolution in the set $\{ Cd(x)< 2 \kappa\}$. Then, the function
                $$ \phi(x) := \left\{ \begin{array}{cc}
                \log (1+ Cd(x) ) \ & {\rm in} \ \left\{C d (x)< \kappa \right\}\\
                 \log (1 + \kappa)& \ {\rm in}    \left\{C d (x)\geq  \kappa \right\}
                \end{array} \right.$$
is the required supersolution, being the minimum of two supersolutions.

We now prove that in $\{ Cd(x)< 2 \kappa\}$,  $\varphi$ is a supersolution. Easily, one gets
   $$ \nabla \varphi = { C\nabla d\over (1+ Cd)}, \ D^2 \varphi = {C D^2 d \over 1+ Cd} - {C^2 \nabla d\otimes \nabla d \over (1+ Cd)^2}.$$
 We suppose first that $\beta<\alpha+2$.
Let  $C_1$ be  such that $D^2 d \leq C_1{\rm Id} $. For $C$ satisfying furthermore
$$ {a C\over  2(1+ 2\kappa) } \geq A N C_1, \ \ 
                                {a \over 2} \left({C\over 1+ 2 \kappa}\right) ^{2+ \alpha-\beta } > 2 b , \ \ 
                  {aC^{2+ \alpha}  \over  4(1+ 2 \kappa)^{2+ \alpha} }  > M\, ,$$
one gets that 
                   $$ |\nabla \varphi|^\alpha  {\cal M}^+ (D^2 \varphi) + b  |\nabla  \varphi |^{\beta}  \leq -M\, ,$$
from which the conclusion follows.
                  
We now suppose that  $\beta = \alpha+2$. We begin to observe that the calculations above can be extended to $\beta = \alpha+2$ as soon as $b < { a \over 4}$. 
 So suppose that $\epsilon =  { a \over 4b}$, that $\varphi$ is some barrier for  
 the equation 
 $$- | \nabla \varphi |^\alpha { \cal M}^+( D^2 \varphi) + { a \over 4} | \nabla \varphi |^\beta \geq M\epsilon ^{1+ \alpha}$$
  Then 
 $\psi =  { \varphi \over \epsilon}$ satisfies 
 $$- | \nabla \psi |^\alpha { \cal M}^+ ( D^2 \psi ) + b  | \nabla \psi |^\beta \geq M . $$
 
  \end{proof}
Recall that, for $\alpha<0$,  the equation we are considering are singular. Hence, we need to treat differently the
solutions when the test function has a vanishing gradient. That is the object of the following lemma,
which  is proved on the model of \cite{BD1} where we treat the case $\beta = \alpha+1$.  
We give the detail of the proof for the convenience of the reader.
  \begin{lemme}\label{lemalphaneg}
Let $\gamma$ and  $b$ be    continuous functions. 
Assume that $v$ is a supersolution of 
 $$-|\nabla v|^\alpha \mathcal{M}^-( D^2 v) +b(x)  |\nabla v |^\beta+ \gamma (v)  \geq f\quad \mbox{in}\quad \Omega$$
such that, for some $C>0$ and $q\geq {\alpha+2 \over \alpha+1}$, $\bar x \in \Omega$ is  a strict local minimum
 of
 $v(x) + C |x-\bar x|^q$. 
Then 
$$f(\bar x) \leq \gamma (v(\bar x)).$$
 \end{lemme}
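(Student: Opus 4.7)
The plan is to argue by contradiction: assume $f(\bar{x}) > \gamma(v(\bar{x}))$, and derive a contradiction by applying the supersolution property at a carefully chosen sequence of test points converging to $\bar{x}$. The natural candidate $\phi(x) = -C|x-\bar{x}|^q$ makes $\bar{x}$ a strict local minimum of $v-\phi$, but since $q > 1$ one has $\nabla\phi(\bar{x}) = 0$, so in the singular regime $\alpha < 0$ the supersolution property is not directly applicable. Following the strategy of \cite{BD1} for the case $\beta = \alpha+1$, I will break this degeneracy by perturbing $\phi$ with a small linear term and then exploit the threshold condition $q\geq(\alpha+2)/(\alpha+1)$ to control the ensuing error in the second-order term.

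Specifically, for each small $p\in\mathbb{R}^N$ I set
$$\phi_p(x) = -C|x-\bar{x}|^q + \langle p, x-\bar{x}\rangle,$$
and use stability of strict minima to obtain a local minimizer $x_p$ of $v-\phi_p$ with $x_p\to\bar{x}$ as $p\to 0$; combining the minimum inequality at $x=\bar{x}$ with lower semicontinuity of $v$ also forces $v(x_p)\to v(\bar{x})$. The key step is to produce a sequence $p_n\to 0$ along which
$$\nabla\phi_{p_n}(x_{p_n}) = -Cq|x_{p_n}-\bar{x}|^{q-2}(x_{p_n}-\bar{x}) + p_n$$
is nonzero, so that the supersolution property applies at $x_{p_n}$. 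This is the main obstacle: the vanishing locus of this gradient is the lower-dimensional set $p = Cq|x_p-\bar{x}|^{q-2}(x_p-\bar{x})$, and I would handle it by a case analysis. If $x_{p_n}=\bar{x}$ along a subsequence then $\nabla\phi_{p_n}(\bar{x}) = p_n\neq 0$ directly; otherwise $x_p \neq \bar{x}$ in a neighborhood of $0$, and a transversal perturbation of the direction of $p$ keeps the gradient nonzero along a subsequence.

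With such $p_n$ in hand, the supersolution inequality at $x_{p_n}$ reads
$$-|\nabla\phi_{p_n}(x_{p_n})|^\alpha \mathcal{M}^-(D^2\phi_{p_n}(x_{p_n})) + b(x_{p_n})|\nabla\phi_{p_n}(x_{p_n})|^\beta + \gamma(v(x_{p_n})) \geq f(x_{p_n}),$$
and I pass to the limit. Setting $r_n = |x_{p_n}-\bar{x}|$ and choosing $|p_n|$ negligible compared with $r_n^{q-1}$, one has $|\nabla\phi_{p_n}(x_{p_n})|\sim r_n^{q-1}$; the Hessian $D^2\phi_{p_n}(x_{p_n})$ does not depend on $p_n$ and has all negative eigenvalues of order $r_n^{q-2}$, so a direct computation gives
$$|\nabla\phi_{p_n}(x_{p_n})|^\alpha \, |\mathcal{M}^-(D^2\phi_{p_n}(x_{p_n}))| \leq C' \, r_n^{q(\alpha+1)-(\alpha+2)}.$$
The hypothesis $q\geq(\alpha+2)/(\alpha+1)$ (with $\alpha+1>0$) renders this exponent nonnegative, so the second-order contribution remains bounded and in fact tends to $0$; the first-order term $b(x_{p_n})|\nabla\phi_{p_n}|^\beta$ vanishes because $|\nabla\phi_{p_n}|\to 0$ and $\beta>0$. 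Using continuity of $f$ and $\gamma$ together with $v(x_{p_n})\to v(\bar{x})$, passing to the limit yields $\gamma(v(\bar{x}))\geq f(\bar{x})$, contradicting the initial assumption.
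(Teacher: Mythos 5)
Your overall skeleton --- touch $v$ from below by a power-type test function whose gradient does not vanish at the contact point, use $q\geq \frac{\alpha+2}{\alpha+1}$ to control $|\nabla\phi|^{\alpha}|D^{2}\phi|$, and pass to the limit --- is the right one, but the mechanism you propose for producing a nonvanishing gradient has a genuine gap, and it fails exactly in the case the paper's definition of supersolution is designed to handle. A strict local minimum of $v(x)+C|x-\bar x|^{q}$ at $\bar x$ is perfectly compatible with $v$ being locally constant near $\bar x$; in that case $v-\phi_{p}=\mathrm{const}+C|x-\bar x|^{q}-\langle p,x-\bar x\rangle$ and, for every small $p$, its minimizer is \emph{exactly} the critical point $x_{p}^{*}=\bar x+(|p|/(Cq))^{1/(q-1)}p/|p|$ of $\phi_{p}$, so $\nabla\phi_{p}(x_{p})=0$ for all $p$. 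Your case (b), ``a transversal perturbation of the direction of $p$,'' cannot rescue this: the critical point of $\phi_{p}$ moves with the direction of $p$ and the minimizer tracks it. Here the conclusion must instead be read off from the first clause of the paper's definition of viscosity supersolution (locally constant $\Rightarrow f(\bar x)\leq\gamma(v(\bar x))$), which your argument never invokes. In the non-locally-constant case the paper does something structurally different from a linear perturbation: it uses the failure of local constancy to find a nearby vertex $t_{\delta}$ with $v(t_{\delta})>v(z_{\delta})+C|z_{\delta}-t_{\delta}|^{q}$, which forces the minimum of $v(\cdot)+C|\cdot-t_{\delta}|^{q}$ to be attained at some $y_{\delta}\neq t_{\delta}$; the gradient of the test function at $y_{\delta}$ then has modulus exactly $Cq|y_{\delta}-t_{\delta}|^{q-1}$ while the Hessian has size $|y_{\delta}-t_{\delta}|^{q-2}$, so the two scales are perfectly matched.

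A second, related gap: even granting some $p_{n}$ with $\nabla\phi_{p_{n}}(x_{p_{n}})\neq 0$, you cannot ``choose $|p_{n}|$ negligible compared with $r_{n}^{q-1}$,'' since $r_{n}=|x_{p_{n}}-\bar x|$ is determined by $p_{n}$ and tends to $0$ at an uncontrolled rate. Without $|p_{n}|\ll r_{n}^{q-1}$ the gradient at the contact point may be far smaller than $r_{n}^{q-1}$ (it degenerates precisely on the set you are trying to avoid), and for $\alpha<0$ the factor $|\nabla\phi_{p_{n}}|^{\alpha}$ then blows up against a Hessian of size $r_{n}^{q-2}$, destroying the bound $|\nabla\phi|^{\alpha}|\mathcal{M}^{-}(D^{2}\phi)|\leq C' r_{n}^{q(\alpha+1)-(\alpha+2)}$. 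Finally, a minor caveat shared with the borderline case of the statement: when $q=\frac{\alpha+2}{\alpha+1}$ the exponent $q(\alpha+1)-(\alpha+2)$ is $0$, so the second-order term is bounded but does \emph{not} tend to $0$ as you claim; this is why the lemma is invoked with $q$ strictly larger in the comparison proof.
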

\begin{proof} Without loss of generality we can suppose that $\bar x=0$.\\
If $v$ is locally constant around $0$ the conclusion is the definition of  viscosity supersolutions. 
If $v$ is not locally constant,  since $q>1$,
for any $\delta>0$ sufficiently small, 
there exist $(z_\delta, t_\delta) \in B_\delta^2$  such that 
\begin{equation}\label{constant} v(t_\delta) > v(z_\delta) + C |z_\delta-t_\delta|^q.\end{equation}
The idea of the proof is to construct a test function near $0$ whose gradient is not zero.

Since $0$ is a strict minimum point of $v(x) + C |x|^q$, there exist $R>0$ and, for any $0<\eta <R$,  $\epsilon(\eta) >0$ satisfying 
       $$ \min_{  \eta\leq |x| \leq R } ( v(x) + C |x|^q ) \geq v(0) + \epsilon (\eta).$$
Let $\eta>0$ be fixed.  We choose $\delta=\delta(\eta)>0$ such that $C\delta^q\leq \frac{\epsilon (\eta)}{4}$. Note that $\delta\to 0$ as $\eta\to 0$.
With this choice of $\delta$, we have
             $$\min_{ |x| \leq R}  (v(x) + C |x-t_\delta|^q ) \leq v(0) + C |t_\delta|^q \leq v(0) + {\epsilon(\eta)  \over 4}.$$
On the other hand,  restricting further $\delta$ such that $q\delta C (R+1)^{ q-1} < {\epsilon(\eta)  \over 4}$, we get
$$\min_{\eta \leq  |x| \leq R} (v(x) + C |x-t_\delta|^q ) \geq \min_{\eta \leq  |x| \leq R} (v(x) + C |x|^q) - qC |t_\delta|   (R+  |t_\delta|)^{q-1} \geq v(0) + {3\epsilon(\eta) \over 4}.$$
This implies that   $\min_{ |x| \leq R}  (v(x) + C |x-t_\delta|^q ) $ is achieved in $B_\eta$. Furthermore,  it 
cannot be achieved in  $t_\delta$ by \eqref{constant}. Hence, there exists $y_\delta\in B_\eta$, $y_\delta \neq t_\delta$, such that 
               $$ v(y_\delta) + C |y_\delta-t_\delta|^q = \min_{|x|\leq R}( v(x) + C |x-t_\delta|^q).$$
 Let us now consider the test function
$$\varphi(z) = v( y_\delta) + C |y_\delta-t_\delta|^q -C |z-t_\delta|^q \, ,$$              
that touches $v$ from below at $y_\delta$. Since $v$ is a supersolution,  we obtain 
\begin{equation}\label{super}
 NA (q-1)  q^{\alpha+2}  C^{\alpha+1} |y_\delta-t_\delta|^{q(\alpha + 1)-(\alpha +2)} + 
                      C^\beta |b(y_\delta)|   |y_\delta-t_\delta|^{(q-1)\beta} +\gamma (v(y_\delta)) \geq f(y_\delta).
\end{equation}         
On the other hand, we observe that 
$$ v(y_\delta)\leq v(y_\delta) + C |y_\delta-t_\delta|^q \leq v(0) + C |t_\delta|^q\leq v(0) + C \delta^q.$$
By the lower semicontinuity of $v$, this implies that
$$v(y_\delta)\to v(0)\quad \hbox{as } \eta\to 0\, .$$
Thus, letting $\eta\to 0$ in \eqref{super}, by the continuity of $\gamma$ and $f$ it follows that
$$
\gamma(v(0))\geq f(0)\, .
$$
  \end{proof} 
We are now in a position to prove the following comparison principle that will be essential
 to the proof of the existence of the solution.

 \begin{theo}\label{comp} Suppose that $\Omega$ is a bounded  domain and  that $\gamma$ is a non decreasing function. Assume  that,  in $\Omega$, $u$ is an upper semicontinuous  bounded from above   viscosity subsolution of 
      $$-F(\grad u,D^2 u)  + b(x)|\nabla u|^\beta + \gamma (u) \leq  g$$
and  that $v$ is a  lower semicontinuous  bounded from below viscosity supersolution of 
       $$- F(\nabla v, D^2 v)  + b(x)|\nabla v|^\beta+ \gamma (v) \geq  f\, ,$$
with $f$ and $g$  bounded and $b$ Lipschitz continuous.\\
 Suppose furthermore that \\
-either $g\leq f$ and $\gamma$ is increasing ,\\
-or $g < f$. \\
 Then 
 $$ u\leq v\  \mbox{ on}\ \partial\Omega \ \Longrightarrow
  \ u\leq v\ \mbox{ in }\ \Omega.$$
          \end{theo}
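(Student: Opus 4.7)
The plan is to argue by contradiction via the standard doubling-of-variables technique, with the interior Lipschitz estimate of Theorem \ref{theolip} providing the key control on the gradient of the test function, as anticipated in the introduction. Suppose $M:=\sup_\Omega(u-v)>0$. Since $u\leq v$ on $\partial\Omega$ and $u-v$ is upper semicontinuous, this supremum is attained at an interior point $\bar x$. Fix an exponent $q>1$ with $q\geq\frac{\alpha+2}{\alpha+1}$ (the lower bound being imposed by Lemma \ref{lemalphaneg}), and for $j$ large and a small localization parameter $\epsilon>0$ introduce
$$\Psi_j(x,y)=u(x)-v(y)-\frac{j}{q}|x-y|^q-\epsilon\bigl(|x-\bar x|^2+|y-\bar x|^2\bigr).$$
Let $(x_j,y_j)\in\bar\Omega\times\bar\Omega$ be a maximum point of $\Psi_j$. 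Standard arguments yield, as $j\to\infty$ (with $\epsilon$ fixed and then sent to $0$), $(x_j,y_j)\to(\bar x,\bar x)$, $u(x_j)\to u(\bar x)$, $v(y_j)\to v(\bar x)$, and $\tfrac{j}{q}|x_j-y_j|^q\to 0$.

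The critical step is to bound the penalization gradient $p_j:=j|x_j-y_j|^{q-2}(x_j-y_j)$. Comparing $\Psi_j(x_j,y_j)\geq\Psi_j(\bar x,\bar x)=M$ with the inequality $u(x_j)-v(y_j)\leq M+c|x_j-y_j|$ furnished by Theorem \ref{theolip}, one gets $\tfrac{j}{q}|x_j-y_j|^{q-1}\leq c$, hence $|p_j|\leq cq=:C$, independently of $j$. Now, when $x_j\neq y_j$, Ishii's Lemma supplies matrices $X_j,Y_j\in\mathcal S_N$ with $(p_j+O(\epsilon),X_j+O(\epsilon))\in\bar J^{2,+}u(x_j)$, $(p_j+O(\epsilon),Y_j+O(\epsilon))\in\bar J^{2,-}v(y_j)$, and, testing the matrix inequality against vectors of the form $(\xi,\xi)$, $X_j\leq Y_j$ up to arbitrarily small terms. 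The structure condition (H2) then gives $F(p_j,X_j)-F(p_j,Y_j)\leq 0$, while the Lipschitz continuity of $b$ together with the bound on $|p_j|$ yields $\bigl(b(y_j)-b(x_j)\bigr)|p_j|^\beta=O(|x_j-y_j|)=o(1)$. Subtracting the two viscosity inequalities and using the continuity of $\gamma$, $f$, $g$ to pass to the limit, one concludes
$$\gamma(u(\bar x))-\gamma(v(\bar x))\leq g(\bar x)-f(\bar x).$$

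If instead $x_j=y_j=\bar x$ for infinitely many $j$, the gradients of the involved test functions vanish and, when $\alpha<0$, we may not test viscosity inequalities directly. However, $\bar x$ is a minimum of $v(y)+\tfrac{j}{q}|y-\bar x|^q$ and, after adding an arbitrarily small perturbation to make it strict, Lemma \ref{lemalphaneg} applies (precisely because $q\geq\frac{\alpha+2}{\alpha+1}$) and gives $\gamma(v(\bar x))\geq f(\bar x)$; a symmetric statement obtained by mirroring the proof of Lemma \ref{lemalphaneg} for subsolutions yields $\gamma(u(\bar x))\leq g(\bar x)$, so the same limit inequality holds. To finish, in case (a) ($g\leq f$ with $\gamma$ strictly increasing) the right-hand side is $\leq 0$ whereas $u(\bar x)>v(\bar x)$ forces the left-hand side to be $>0$; in case (b) ($g<f$) the right-hand side is strictly negative while monotonicity of $\gamma$ makes the left-hand side $\geq 0$. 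Either way we reach a contradiction. The main obstacle is the uniform bound on $|p_j|$: when $\beta\neq\alpha+1$ no scaling argument balances the ellipticity against the Hamiltonian term, and it is exactly Theorem \ref{theolip} that removes this obstruction; a secondary subtlety is treating the singular case $\alpha<0$ through Lemma \ref{lemalphaneg}, which forces the choice $q\geq\frac{\alpha+2}{\alpha+1}$.
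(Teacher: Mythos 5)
Your proposal follows the same skeleton as the paper's proof: doubling of variables with a power-$q$ penalization, the interior Lipschitz estimate of Theorem \ref{theolip} to bound $j|x_j-y_j|^{q-1}$ (hence $|p_j|$), Ishii's lemma plus degenerate ellipticity when $x_j\neq y_j$, and Lemma \ref{lemalphaneg} when the penalization gradient vanishes. Two points differ from the paper and need tightening. First, the localization term $\epsilon(|x-\bar x|^2+|y-\bar x|^2)$ is superfluous --- once $\sup_\Omega(u-v)>0$, the boundary condition $u\le v$ on $\partial\Omega$ already forces the supremum of the upper semicontinuous function $u-v$ to be attained at an interior point --- and it is mildly harmful: for $\alpha<0$ the admissible test gradient at $x_j$ is $p_j+2\epsilon(x_j-\bar x)$, which you must check is nonzero before testing, and in the coincidence case the function whose minimum you inspect is $v(y)+\frac{j}{q}|y-x_j|^q+\epsilon|y-\bar x|^2$, which is not of the exact form $v+C|\cdot-\bar x|^q$ required by Lemma \ref{lemalphaneg}. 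Dropping the localization, as the paper does, removes both problems at no cost. Second, ``adding an arbitrarily small perturbation to make it strict'' must be made precise, since an arbitrary perturbation would leave the scope of Lemma \ref{lemalphaneg}; the correct observation is that a (possibly non-strict) local minimum of $v(y)+\frac{j}{q}|y-x_j|^q$ is automatically a \emph{strict} local minimum of $v(y)+\frac{j+1}{q}|y-x_j|^q$, which is again of the admissible form, so the Lemma (and its mirror statement for the subsolution $u$) applies with $C=(j+1)/q$. With that fix your treatment of the case $x_j=y_j$ is actually shorter than the paper's, which instead argues that if one of the two extrema is not strict one can relocate the maximizer to a pair with $x_j\neq y_j$ and re-enter the Ishii branch, reserving Lemma \ref{lemalphaneg} for the doubly strict situation only. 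The remaining steps (the uniform bound on $|p_j|$ via Theorem \ref{theolip}, the sign of $F(p_j,X_j)-F(p_j,Y_j)$ from the matrix inequality, the $O(|x_j-y_j|)$ estimate on the $b$-term, and the final dichotomy on $\gamma$, $f$, $g$) coincide with the paper's.
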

\begin{proof} 
{\bf The case $\alpha \geq 0$.}
We use classically the doubling of variables.
So we define  for all $j \in N$, $\psi_j(x, y) = u(x)-v(y) -{j\over 2} |x-y|^2 $. Suppose by contradiction that $ u > v$ somewhere,  then the supremum of $u-v$ is strictly positive and achieved inside $\Omega$.  

Then one also has  $\sup \psi_j >0$  for $j$ large enough and it is achieved  on  $(x_j, y_j) \in \Omega^2$. 
Using Ishii's lemma , \cite{I1},  there exist $X_j$ and $Y_j$ in $S$ such that 
             $(j(x_j-y_j), X_j) \in \overline{J}^{2,+} u(x_j)$, 
              $(j(x_j-y_j), -Y_j) \in \overline{J}^{2,-} v(y_j)$. 
It is clear that Theorem \ref{theolip}  in section 2 can be extended to the case where $\Omega$ replaces $B(0, 1)$ and $\Omega^\prime \subset \subset \Omega$ replaces $B(0, r)$. Since $(x_j, y_j)$ converges to $( \bar x, \bar x)$, both of them  belong, for $j$ large enough, to some $\Omega^\prime $. 
We use Theorem  \ref{theolip}   to obtain that 
               $ j |x_j-y_j|$ is bounded. 
               
Indeed $u(x_j)-v(y_j)-{j\over 2} |x_j-y_j|^2 \geq \sup (u-v)$, hence
$$ {j\over 2} |x_j-y_j|^2 \leq u(x_j)-v(y_j) -\sup (u-v) \leq \sup (u-v)+ c |x_j-y_j| -\sup (u-v).$$
We obtain 
             \begin{eqnarray*}
              g(x_j)- \gamma ( u(x_j))  &\geq & - F(j(x_j-y_j),X_j) +  b(x_j) | j(x_j-y_j)|^\beta\\
              &\geq & -F(j (x_j-y_j), -Y_j)  + b(y_j)   | j(x_j-y_j)|^\beta + \omega (j |x_j-y_j|^2) \\
              &- &o(x_j-y_j) (j |x_j-y_j|)^\beta  \\
              &\geq &f(y_j) + o(j |x_j-y_j|^2)|(j|x_j-y_j|)^\alpha - \gamma (v(y_j)).\\
             \end{eqnarray*}
By passing to the limit,  one gets  on the point $\bar x$ limit of a subsequence of $x_j$ 
$$ g(\bar x) -\gamma ( u(\bar x))\geq f(\bar x)-\gamma(v(\bar x))$$
and in both cases we obtain a contradiction. 

{\bf The case $\alpha <0$.} 
We recall that in the case $\alpha <0$ one must use  a   convenient definition of viscosity solutions, see \cite{BD1}.

We suppose by contradiction that 
         $ \sup (u-v) >0$ then it is achieved inside $\Omega$,  and taking $q > {\alpha+2 \over \alpha +1}$
the function
$$\psi_j(x,y)=u(x)-v(y)-{j\over q}|x-y|^q$$
 has  also  a local maximum on $(x_j, y_j)$, with $x_j\neq y_j$.
Then, there are
$X_j$,
$Y_j$
$\in {\mathcal S}^N$ such that 
$$( j(|x_j-y_j|^{q-2} (x_j-y_j), X_j)\in
J^{2,+} u(x_j)$$

$$( j(|x_j-y_j|^{q-2} (x_j-y_j), -Y_j)\in J^{2,-}
v(y_j)$$

and $$ -4jk_j\left(\begin{array}{cc} I&0\\
0&I\end{array}\right)\leq \left(\begin{array}{cc} X_j&0\\
0&Y_j\end{array}\right)\leq 3jk_j\left( \begin{array}{cc}
I&-I\\ -I&I
\end{array}
\right)$$ where $$k_j =2^{q-3} q(q-1) |x_j-y_j|^{q-2}
.$$

 Note that :

(i) from the boundedness of $u$ and $v$ one deduces that $|x_j-y_j |\rightarrow 0$ as $j\rightarrow \infty$. Thus up to subsequence $(x_j, y_j)\rightarrow (\bar x, \bar x) $. 

 (ii) One has $\lim\inf \psi_j(x_j,y_j)\geq \sup  (u-v)$;
 
  (iii) $\lim\sup \psi_j(x_j,y_j)\leq\lim\sup u(x_j)-v(y_j)= u(\bar x)-v(\bar x) $
  
 (iv) Thus $j|x_j-y_j|^q\rightarrow 0$ as $j\rightarrow +\infty$ and $\bar x$ is a maximum point for $u-v$.
 
 Furthermore by the Lipschitz estimates in Theorem \ref{theolip}
 $ j |x_j-y_j|^{q}\leq u(x_j)-v(y_j) -\sup (u-v)  \leq c |x_j-y_j|$  since $(x_j, y_j)$ belong to a compact set inside $\Omega$. This implies that $j|x_j-y_j|^{q-1}$ is bounded.  
\bigskip

{\bf Claim: } {\it For $j$ large enough,
 there exist $x_j$ and $y_j$ such that $(x_j, y_j)$ is a maximum pair
for $\psi_j$ and 
$x_j\neq y_j$.}

 \noindent Indeed suppose that $x_j = y_j$. Then one would have
\begin{eqnarray*}
\psi_j (x_j, x_j)&=& u(x_j)-v(x_j)
\geq u(x_j)-v(y)-{j\over q} |x_j-y|^q;\\
\psi_j (x_j, x_j)&=& u(x_j)-v(x_j)
\geq u(x)-v(x_j)-{j\over q} |x-x_j|^q;\\
\end{eqnarray*}
 and then $x_j$ would be a local minimum for
$\Phi:=v(y)+{j\over q} |x_j-y|^q, $ 
{ \rm and\  similarly\   a\  local \ maximum\  for}\ 
$\Psi:=u(x)-{j\over q} |x_j-x|^q.$

We first exclude that $x_j$ is  both a strict local maximum and a strict local minimum.
Indeed  in that case, by Lemma \ref{lemalphaneg}
$$\gamma (v(x_j))\geq f(x_j), \ { \rm and} \ \gamma (u (x_j))\leq g( x_j).$$

This is a contradiction with the assumptions  on $\gamma$ and $f$ and $g$, once we pass to the limit when $j$ goes to $\infty$  since we get 
$$\gamma (v(\bar x)) \geq f( \bar x) \geq g(\bar x) \geq \gamma ( u( \bar x)).$$ 
Hence  $x_j$ cannot be   both a strict minimum for $\Phi$  and a 
strict maximum for $\Psi$.
Suppose that this is the case for $\Phi$, then  there exist
$\delta>0$ and $R> \delta$ such that $B(x_j, R)\subset \Omega$ and 

$$v(x_j) = \inf_{\delta\leq |x-x_j|\leq R} \{ v(x)+ {j\over q}
|x-x_j|^q\}.$$

Then if $y_j$ is a point on which the minimum above is achieved, one has 
$$v(x_j)=v(y_j)+{j\over q} |x_j-y_j|^q,$$ 
and  $(x_j, y_j)$ is still a maximum point for $\psi_j$. 

\bigskip We can now conclude. By  Ishii's  Lemma    there exist
$X_j$ and $Y_j$ such that 

$$\left(j|x_j-y_j|^{q-2} (x_j-y_j), {X_j}\right)\in J^{2,+} u(x_j)$$
and 

$$\left(j|x_j-y_j|^{q-2} (x_j-y_j), {-Y_j}\right)\in J^{2,-} v(y_j).$$

We can use the fact that $u$ and $v$ are respectively sub and super solution to obtain, (recalling that $j |x_j-y_j|^{q-1}$ is bounded) :

\begin{eqnarray*}
g(x_j) & \geq &F( j |x_j-y_j|^{ q-2} (x_j-y_j), X_j)+b(x_j) |j(x_j-y_j|^{q-1}|^{ \beta} +\gamma (u(x_j)) \\
&\geq & F( j |x_j-y_j|^{ q-2} (x_j-y_j), -Y_j)+b(y_j) |j(x_j-y_j|^{q-1}|^{ \beta}  \\
&- &{\rm lip} b |x_j-y_j|  |j|x_j-y_j|^{q-1} |^\beta   +\gamma (v(y_j)) + \left(- \gamma (v(y_j))+\gamma (u(x_j)) \right)\\
&\geq & f(y_j) -  \gamma (v(y_j))+\gamma (u(x_j)) + O(x_j-y_j) 
 \end{eqnarray*}
  Passing to the limit one obtains 
  $$ g(\bar x) \geq f(\bar x) - \gamma (v(\bar x))+\gamma (u(\bar x))$$
  We have obtained a contradiction in each of the cases "$f>g$ and $\gamma$ is non decreasing", or "$f \geq g$ and $\gamma$ is increasing". 
\end{proof}   
We derive from the construction of barriers and the comparison theorem the following existence result for Dirichlet homogeneous boundary condition 

 \begin{theo}
  Let $\Omega$ be a bounded ${ \cal C}^2$ domain, 
   $\lambda >0$, $f \in { \cal C}( \overline{ \Omega})$ and $b \in W^{1, \infty} ( \Omega)$. Under the assumptions (H1) and (H2),  there exists a unique $u $ which satisfies 
    \begin{equation}\label{eqqq}
\left\{\begin{array}{lc}
      -F(  \nabla u, D^2 u)+b(x)  |\grad u|^\beta  + \lambda |u |^\alpha u =   f& \hbox{ in} \  \Omega \\
      u= 0&\ \hbox{ on} \  \  \partial \Omega 
       \end{array}\right.
\end{equation}
 Furthermore, $u$ is Lipschitz continuous up to the boundary. 
 \end{theo}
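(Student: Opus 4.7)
Uniqueness is immediate: the function $\gamma(s)=\lambda|s|^\alpha s$ is strictly increasing for $\lambda>0$, hence Theorem \ref{comp} applied in the form ``$g\leq f$ and $\gamma$ increasing'' (with $f=g$) gives $u_1\leq u_2$ and $u_2\leq u_1$ for any two solutions with the same boundary datum. Existence will be obtained by Perron's method, using Proposition \ref{sursolution} to produce the obstacles and Theorem \ref{comp} to run the standard machinery.

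Concretely, set $M:=\|f\|_\infty+1$ and apply Proposition \ref{sursolution} to obtain a continuous, nonnegative function $\overline{\varphi}$, vanishing on $\partial\Omega$, which satisfies
\[
-|\nabla \overline{\varphi}|^\alpha \mathcal{M}^+(D^2\overline{\varphi})-b_\infty|\nabla\overline{\varphi}|^\beta+\lambda\,\overline{\varphi}^{1+\alpha}\geq M
\]
in the viscosity sense, where $b_\infty=\|b\|_\infty$. Since $F(p,M)\leq |p|^\alpha \mathcal{M}^+(M)$ by (H2) and $b(x)|p|^\beta\geq -b_\infty|p|^\beta$, the function $\overline{\varphi}$ is a supersolution of the equation in \eqref{eqqq} with right-hand side $f$. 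Symmetrically, $\underline{\varphi}:=-\overline{\varphi}$ is a subsolution vanishing on $\partial\Omega$. Define
\[
u(x):=\sup\bigl\{w(x):w\ \text{is a subsolution of \eqref{eqqq} with}\ \underline{\varphi}\leq w\leq \overline{\varphi}\bigr\}.
\]
By the standard Perron argument, the upper semicontinuous envelope $u^*$ is a subsolution and the lower semicontinuous envelope $u_*$ is a supersolution (both bounded between the barriers). Applying Theorem \ref{comp} with the strictly increasing $\gamma(s)=\lambda|s|^\alpha s$ and $f=g$ yields $u^*\leq u_*$ in $\Omega$, hence $u=u^*=u_*$ is continuous and solves \eqref{eqqq}. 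Because $\underline{\varphi}=\overline{\varphi}=0$ on $\partial\Omega$, one has $u=0$ on $\partial\Omega$.

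For Lipschitz regularity up to the boundary, interior Lipschitz estimates on compactly contained subsets follow directly from Theorem \ref{theolip} applied to $u$ as both subsolution and supersolution (after rescaling balls). To upgrade to boundary Lipschitz regularity, I would use that the explicit barrier $\overline{\varphi}(x)=\log(1+Cd(x))$ built in the proof of Proposition \ref{sursolution} is Lipschitz on $\overline{\Omega}$, with Lipschitz constant depending only on $C$, $\|f\|_\infty$, $\lambda$, $\Omega$ and the structural data. Since $|u(x)|\leq \overline{\varphi}(x)\leq C_1 d(x)$ near $\partial\Omega$, for any $x\in\Omega$ close to $\partial\Omega$ and any $y\in \partial\Omega$ one has $|u(x)-u(y)|=|u(x)|\leq C_1|x-y|$; combining this boundary bound with the interior Lipschitz estimate in the usual way (by splitting according to whether $|x-y|$ is comparable to $\mathrm{dist}(x,\partial\Omega)$) produces a global Lipschitz constant on $\overline{\Omega}$.

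\textbf{Main obstacle.} The principal difficulty is ensuring that Perron's method goes through when $\alpha<0$, since then the equation is singular at critical points of the test functions and the classical envelope arguments must be compatible with the modified viscosity notion from the definition and with Lemma \ref{lemalphaneg}. However, this is precisely what Theorem \ref{comp} was designed to handle; once comparison is available in the correct class, the Perron construction proceeds as in the standard framework, and the only remaining care concerns verifying that the barriers $\pm\overline{\varphi}$ attain the boundary datum continuously, which is explicit from their construction.
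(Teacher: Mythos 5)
Your existence and uniqueness argument coincides with the paper's: barriers from Proposition \ref{sursolution} (your reduction $F(p,M)\leq |p|^\alpha\mathcal{M}^+(M)$ from (H2) is correct), Perron's method adapted to the singular case as in \cite{BD1}, and comparison via Theorem \ref{comp} with the strictly increasing $\gamma(s)=\lambda|s|^\alpha s$. That part is fine.

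The gap is in the final step, the Lipschitz estimate up to the boundary. Your plan is ``interior Lipschitz (Theorem \ref{theolip}) $+$ barrier bound $|u|\leq C_1 d$ $+$ the usual splitting according to whether $|x-y|\lesssim \mathrm{dist}(x,\partial\Omega)$''. The splitting argument only yields a uniform Lipschitz constant if the interior estimate, applied on a ball $B_{d(x)/2}(x)$, has a constant of order $\frac{1}{d(x)}\osc_{B_{d(x)}(x)}u + O(1)$, i.e.\ a properly \emph{scaled} interior estimate. Theorem \ref{theolip} does not give this: tracking its proof, the constant $c_r$ is of order $M=\frac{(|u|_\infty+|v|_\infty)+1}{\delta}$ with $\delta$ a fixed small parameter depending on $\|f\|_\infty$, $\|b\|_{W^{1,\infty}}$, $a$, $A$; the additive $+1$ does not shrink with the oscillation, so after rescaling $B_{d(x)}(x)$ to $B_1$ you only obtain $\mathrm{Lip}(u)\lesssim 1/d(x)$ on $B_{d(x)/2}(x)$. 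Combined with $|u|\le C_1d$ this gives a H\"older bound, not a Lipschitz one. Moreover, as the introduction points out, rescaling is delicate here because $F$ and the Hamiltonian $b|\nabla u|^\beta$ have different homogeneities when $\beta\neq\alpha+1$, so one cannot simply quote Theorem \ref{theolip} on small balls; one would have to re-run its proof with oscillation-proportional constants. The paper avoids all of this by a direct global argument: it runs the doubling of variables on all of $\Omega$ with the test function $\phi(x,y)=u(x)-u(y)-M\omega(|x-y|)$ (no localization term), chooses $M>2C$ where $|u|\leq Cd$, and uses the inequality $u(x)\leq Cd(x)\leq \frac{M}{2}d(x)\leq M\omega(|x-y|)$ for $y\in\partial\Omega$ to exclude that the maximum point touches the boundary; the rest is the computation of Theorem \ref{theolip}. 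You should replace your splitting step by this global doubling argument, or else prove a genuinely scaled version of the interior estimate.
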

  \begin{proof} 
   The existence result is an easy consequence of the comparison principle and Perron's method adapted to our  framework, as it is done in \cite{BD1}. 
    In order to prove that $u$ is Lipschitz continuous up to the boundary,  observe that, by construction,  there exist $C>c>0$ such that
   $c d \leq u \leq Cd$, where $d$ is the distance function from $\partial \Omega$. Then, consider the function 
      $$\phi(x, y) = u(x)-u(y)-M \omega ( |x-y|)\, , $$
where  $\omega$ has been defined in (\ref{omega}), and suppose that $M >2 C$. Arguing as in the proof of Theorem \ref{theolip},  we assume by contradiction that  $\phi$ is positive somewhere, say on $( \bar x, \bar y)$. Then, neither $\bar x$ nor $\bar y$ belong to the boundary, due to the inequality, for $y \in \partial \Omega$,
        $$u(x) \leq Cd(x)\leq { M \over 2} d(x) \leq M \omega ( |x-y|).$$ 
A similar reasoning proves that $ \bar x$ cannot belong to the boundary. 
The rest of the proof runs  as in Theorem (\ref{theolip}), with  even simpler computations, since we do not need the additional term  $ |x-x_o|^2+|y-y_o|^2$  in the auxiliary function $\phi$.
 
\end{proof}
    
We end this section with some Strong maximum principle and Hopf principle. 
 
\begin{theo}\label{strong} Suppose that $u$ is a non negative solution in $\Omega$ of 
  $$-|\grad u|^\alpha \mathcal{M}^-(D^2 u)  + b(x)|\nabla u|^\beta \geq 0$$
   Then either $u>0$ inside $\Omega$, or $u \equiv 0$. 
Moreover if   $\bar x$ is in $\partial \Omega$ so that an interior sphere condition holds and $u(\bar x)=0$, then 
    $" \partial_n u(\bar x)"  <0$
\end{theo}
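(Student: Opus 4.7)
The plan is a classical barrier argument: construct a single barrier in a suitable annulus, then use it both to prove the strong maximum principle (by deriving a contradiction at an interior vanishing point) and the Hopf inequality (by comparing normal derivatives at a boundary vanishing point). The novel feature compared with the standard semilinear setting is that the barrier has to be tuned to the unusual scaling of the equation: the second-order term is homogeneous of degree $\alpha+1$ in the gradient while the first-order term is of degree $\beta$, and the two exponents need not coincide.

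Fix a ball $B_R(x_0)\subset\Omega$ on which $u>0$ (with $\overline{B_R(x_0)}\subset\Omega$ for the interior argument, and with $\bar x\in\partial B_R(x_0)\cap\partial\Omega$ for the boundary argument). In the annulus $A=B_R(x_0)\setminus\overline{B_{R/2}(x_0)}$ I consider the two-parameter barrier
$$h(x)=\lambda\bigl(e^{-k|x-x_0|^2}-e^{-kR^2}\bigr).$$
A direct computation gives $|\nabla h|\sim\lambda k|x-x_0|e^{-k|x-x_0|^2}$, while $D^2h$ has one positive eigenvalue of order $\lambda k^2|x-x_0|^2 e^{-k|x-x_0|^2}$ and $N-1$ negative eigenvalues of order $\lambda k e^{-k|x-x_0|^2}$, so that $\mathcal{M}^-(D^2h)\geq c\lambda k^2 e^{-k|x-x_0|^2}$ on $A$ once $k$ is large enough in terms of $a,A,N,R$. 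Writing $h=\lambda v$, the desired strict subsolution inequality $-|\nabla h|^\alpha\mathcal{M}^-(D^2h)+b(x)|\nabla h|^\beta\leq -c'<0$ on $A$ reduces to dominating $\lambda^{\beta-\alpha-1}|b|_\infty |\nabla v|^{\beta-\alpha}$ by $\tfrac12\mathcal{M}^-(D^2v)$; this is achieved by taking $\lambda$ small when $\beta>\alpha+1$, $\lambda$ large when $\beta<\alpha+1$, and simply $k$ large when $\beta=\alpha+1$. The matching constraint $h\leq u$ on $\partial B_{R/2}(x_0)$, namely $\lambda(e^{-kR^2/4}-e^{-kR^2})\leq\delta_0:=\min_{\partial B_{R/2}(x_0)}u>0$, remains compatible with any of the previous choices since $e^{-kR^2/4}-e^{-kR^2}\to 0$ as $k\to\infty$.

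With $h$ a classical strict subsolution on $A$ and $h\leq u$ on $\partial A$, the comparison Theorem \ref{comp} (applied with $\gamma\equiv 0$ and the strict inequality $g=-c'<0=f$) yields $h\leq u$ in $A$. For the strong maximum principle, assume $u$ vanishes at an interior point $z_0$ without being identically zero on its connected component; a standard sliding-ball argument inside $\{u>0\}$ produces $x_0$ with $\overline{B_R(x_0)}\subset\Omega$ and some $y_0\in\partial B_R(x_0)$ at which $u(y_0)=0$. The barrier, extended by its analytic formula to a full neighborhood of $y_0$ (where $h<0\leq u$ outside $B_R(x_0)$), satisfies $h\leq u$ in a neighborhood of $y_0$ with equality at $y_0$, so $h$ touches $u$ from below at the interior point $y_0$ with $\nabla h(y_0)\neq 0$; the viscosity supersolution inequality for $u$ at $y_0$ directly contradicts the strict subsolution inequality for $h$. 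For the Hopf estimate, the strong maximum principle just proved guarantees that the minimum $\delta_0$ of $u$ on the inner sphere is strictly positive; having $h\leq u$ in $A$ with both functions vanishing at $\bar x$ and $\nabla h(\bar x)$ pointing radially outward, the inward difference quotient of $u$ at $\bar x$ is bounded below by the strictly positive quotient of $h$, which is the Hopf inequality understood in the viscosity sense.

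The principal technical difficulty is making a single barrier work uniformly across the whole range $\beta\in(0,\alpha+2]$: as $\beta$ crosses $\alpha+1$ the dominant scaling of the two terms switches, forcing the two-parameter $(k,\lambda)$ tuning sketched above. A secondary point, harmless here, is that when $\alpha<0$ the equation is singular, but since $\nabla h\neq 0$ throughout $A$ the standard viscosity test applies and Lemma \ref{lemalphaneg} does not need to be invoked in this part of the argument.
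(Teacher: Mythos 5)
Your architecture coincides with the paper's: a radial, strictly decreasing exponential barrier in an annulus, the comparison principle of Theorem \ref{comp} (with $\gamma\equiv 0$ and strictly ordered right-hand sides), a contradiction at a touching point where the barrier has nonvanishing gradient, and the Hopf inequality via the radial difference quotient of the barrier. The paper takes $\delta(e^{-cr}-e^{-cR})$ on $B_{2R}\setminus B_{R/2}$ with the touching point at $r=R$ in the interior of the annulus, while you take a Gaussian on $B_R\setminus\overline{B_{R/2}}$ and extend it across $\partial B_R$; both variants are legitimate, and for $\beta\geq \alpha+1$ your tuning ($k$ large when $\beta=\alpha+1$; then $\lambda$ small when $\beta>\alpha+1$, including $\beta=\alpha+2$) closes the argument exactly as the paper's choice of $c$ large, $\delta$ small does.

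The gap is the subcase $\beta<\alpha+1$, and it is not removable. The strict subsolution inequality is equivalent to $|b|_\infty\,|\nabla h|^{\beta-\alpha-1}<\mathcal{M}^-(D^2h)/|\nabla h|$, and on the annulus $\mathcal{M}^-(D^2h)/|\nabla h|$ is of order $k$. When $\beta-\alpha-1<0$ the constraint bites where $|\nabla h|=2\lambda k r e^{-kr^2}$ is smallest, i.e.\ at $r=R$, and it forces $\lambda\geq c(k)\,e^{kR^2}$ with $c(k)$ only polynomial in $k$; the matching constraint on $\partial B_{R/2}$ forces $\lambda\leq 2\delta_0 e^{kR^2/4}$. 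These are incompatible for large $k$, and $k$ cannot be taken small because the lower bound $\mathcal{M}^-(D^2h)\geq c\lambda k^2e^{-kr^2}$ already requires $k\geq k_0(a,A,N,R)$ while $\delta_0=\min_{\partial B_{R/2}}u$ may be arbitrarily small. So ``take $\lambda$ large'' and ``the matching remains compatible since $e^{-kR^2/4}-e^{-kR^2}\to 0$ as $k\to\infty$'' cannot both hold: raising $k$ to restore the matching multiplies the bad term by $e^{kr^2(1+\alpha-\beta)}$ and destroys the subsolution property. No barrier can repair this, because for constant $b>0$ and $\beta<\alpha+1$ the statement itself fails: with $q=\frac{2+\alpha-\beta}{1+\alpha-\beta}>1$ and a suitable $C>0$, the function $u(x)=C\,((x_1)_+)^{q}$ is nonnegative, vanishes on a half-space without being identically zero, and satisfies $-|\nabla u|^\alpha\mathcal{M}^-(D^2u)+b|\nabla u|^\beta=0$ in the viscosity sense (the classical dead-core phenomenon for sublinear Hamiltonians). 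Your proof is therefore complete only for $\beta\geq\alpha+1$ — the same range that the paper's own computation actually covers, since its condition $\delta^{1+\alpha}\frac{a}{2}c^{2+\alpha-\beta}>\delta^{\beta}|b|_\infty$ likewise ignores the mismatched exponential factors $e^{-c(1+\alpha)r}$ versus $e^{-c\beta r}$, which only have the favorable ordering when $\beta\geq\alpha+1$. For $\beta<\alpha+1$ the theorem needs an extra hypothesis (e.g.\ $b\leq 0$), and you should say so rather than claim the tuning works.
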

\begin{proof}
Suppose that there exists an interior point $x_1$ such that $u(x_1)=0$. 
We can choose $x_1$ such that there exists  $x_o$  satisfying
 $|x_o-x_1| = R$, the ball $B(x_o, 2R) \subset \Omega$ and $u>0$ in $B(x_o, R)$. 
 Since $u$ is lower semi-continuous , let 
  $\delta < \inf (1, \inf_{ B(x_o, {R \over 2})} u)$, and define  in the crown $B(x_o, 2R) \setminus B(x_o, {R\over 2})$
  
  $ v(r) = \delta (e^{-c r}-e^{-c R})$ where 
  $c > {2 (N-1) A \over Ra}$, and $ { a\over 2} c^{2+ \alpha-\beta }> |b |_\infty$, if $\beta < \alpha +2$, ( which implies that  $\delta^{1+ \alpha} { a\over 2} c^{2+ \alpha-\beta }> \delta^\beta |b |_\infty$), if $\beta = \alpha +2$, note that we can take $\delta$ so that 
  $\delta ^{ 1+ \alpha-\beta } {a \over 2} > |b|_\infty$ . Then  one has 
  $v \leq u$ on the boundary of the crown, and 
  $$-|  v^\prime |^\alpha ( a v^{ \prime \prime }+ A{N-1 \over r} v^\prime ) + |b|_\infty |v^\prime |^\beta <0$$ 
  
Using the comparison principle one gets that 
$u \geq v$. Then $v$ is a ${ \cal C}^2$ function which achieves $u$ on below on $x_1$ and this is  a contradiction with the fact 
that $u$ is a super-solution. 
The last statement is proved .
Suppose that $\bar x$  is on the boundary and consider an interior sphere $B(x_o, R) \subset \Omega$ with $|x_o - x_1| = R$, and the function $v$ as above. We still have by construction that $v \geq u$. Then taking for$ h > 0$ small, $x_h = hx_o +(1-h)x_1$ one has $|x_h -xo| = (1 - h)R$ and
$${u(x_h) - u(x_1)\over x_h-x_1}  \geq { v(x_h) - v(x_1)\over x_h-x_1}  \geq cRe^{-cR} >0$$ 
which implies the desired Hopf's principle.
\end{proof}

\section{Non homogeneous boundary conditions} \label{Nonh}
 In order to  obtain solutions for the non homogeneous boundary condition, we need  the construction of barriers, and a Lipschitz estimate near the boundary, as follows 
          \begin{lemme}\label{lemboundary} Let $B^\prime$ be the unit ball in $\R^{N-1}$, and let $\varphi\in W^{1, \infty} ( B^\prime)$.
Let  $\eta\in{\cal C}^2(B^\prime)$ such that $\eta(0)=0$ and $\nabla \eta (0)=0$. Let $d$ be the distance to the 
hypersurface $\{ x_N = \eta(x^\prime)\}$. 

Then,  for all $r<1$ and for all $\gamma <1$, there exists   
$\delta_o$ depending on $\|f\|_\infty$, $a$, $A$, $\|b\|_\infty$, $\Omega$, $r$ and  $\rm{Lip}\varphi$,  $|u|_\infty$    such that for all $\delta <\delta_o$   ,
if $u$  be a USC bounded by above sub- solution   of 
\begin{equation}\label{eqqqfi}
\left\{\begin{array}{lc}
      -F(  \nabla u, D^2 u)+b |\grad u|^\beta  \leq  f& {\rm in} \  B\cap \{ x_N >\eta(x^\prime)\}\\
         u\leq \varphi &\ {\rm on} \  \  B\cap  \{ x_N = \eta(x^\prime)\}
       \end{array}\right.
\end{equation}
such that $u  \leq 1$ then it satisfies 
$$ u(x^\prime, x_N)\leq \varphi(x^\prime)+    \log ( 1+    {2 \over \delta} d)\ \mbox{ in}  
\ B_r(0)\cap\{x_N =\eta(x^\prime)\} .$$  
 We have a symmetric result for supersolutions bounded by below. 
 \end{lemme}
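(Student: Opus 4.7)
The proof follows the Ishii--Lions doubling-of-variables template already used in the interior Lipschitz estimate of Theorem \ref{theolip}, but with the role of the unknown $v$ replaced by an explicit upper barrier built from the boundary data $\varphi$ and a logarithmic profile $\omega(t)=\log(1+(2/\delta)t)$. Since $\eta\in{\mathcal C}^2$ with $\eta(0)=0$ and $\nabla\eta(0)=0$, the distance $d$ is ${\mathcal C}^2$ in a one-sided neighbourhood of $\{x_N=\eta(x')\}$, with $|\nabla d|=1$ and $|D^2 d|$ bounded by the ${\mathcal C}^2$--norm of $\eta$; both facts will be used to control the Hessian of the barrier.

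Fixing a point $x_0\in B_r\cap\{x_N>\eta(x')\}$, I would argue by contradiction, assuming $u(x_0)>\varphi(x_0')+\omega(d(x_0))$, and examine on the closure of $\bigl(B\cap\{x_N>\eta(x')\}\bigr)\times B'$ the auxiliary function
$$\Phi(x,y')=u(x)-\varphi(y')-M\,\omega\bigl(|x-\pi(y')|\bigr)-L\bigl(|x-x_0|^2+|y'-x_0'|^2\bigr),$$
where $\pi(y')=(y',\eta(y'))$ and $M$, $L$ are chosen depending on $|u|_\infty$, $r$, $\mathrm{Lip}\,\varphi$ and $|\eta|_{{\mathcal C}^2}$. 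Under these choices $\sup\Phi>0$ and, with $L$ large enough, the supremum is attained at some $(\bar x,\bar y')$ with $\bar y'\in B'$ and $\bar x$ well inside $B$. The delicate point is then to exclude $\bar x$ from the hypersurface $\{x_N=\eta(x')\}$: if it lay there, the boundary inequality $u(\bar x)\le\varphi(\bar x')$ together with $|\varphi(\bar x')-\varphi(\bar y')|\le\mathrm{Lip}(\varphi)\,|\bar x'-\bar y'|\le\mathrm{Lip}(\varphi)\,|\bar x-\pi(\bar y')|$ and the slope $\omega'(0)=2/\delta$ would force $\mathrm{Lip}\,\varphi\ge 2M/\delta$, a contradiction once $\delta$ is small enough.

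Hence $\bar x$ lies strictly inside $\{x_N>\eta(x')\}$ and Ishii's lemma provides $(q^x,X)\in\overline J^{2,+}u(\bar x)$ with $|q^x|$ of order $M\omega'(|\bar x-\pi(\bar y')|)\sim 2M/\delta$, and with $X$ controlled above by $B+2\varepsilon B^2+2L\,I$, where $B$ inherits from the Hessian of $x\mapsto M\omega(|x-\pi(\bar y')|)$ a negative eigenvalue of size $M\omega''\sim -M(2/\delta)^2$ in the direction $\bar x-\pi(\bar y')$. Plugging into the subsolution inequality and exploiting \eqref{eqF1} exactly as in \eqref{181}, one arrives at
$$a\,c\,M^{\alpha+1}\,(\omega')^{\alpha+2}\;\le\;|b|_\infty\,M^\beta\,(\omega')^{\beta}+|f|_\infty+\text{lower order errors from }D^2 d.$$
Since $\beta\le\alpha+2$ and $\omega'$ is large for small $\delta$, the left-hand side dominates strictly, giving the contradiction; in the critical case $\beta=\alpha+2$ one additionally rescales $\omega\mapsto\omega/\varepsilon$ with $\varepsilon=a/(4|b|_\infty)$, exactly as in the proof of Proposition \ref{sursolution}.

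The main obstacle is the boundary exclusion step above --- guaranteeing that the positive maximum $(\bar x,\bar y')$ does not degenerate with $\bar x$ on the hypersurface --- since the barrier must carry enough initial slope to dominate $\mathrm{Lip}\,\varphi$; this is where both the flatness $\nabla\eta(0)=0$ (which aligns $d$ with $x_N-\eta(x')$ to leading order, making the penalty $|x-\pi(y')|$ comparable to $d(x)$) and the logarithmic choice giving $\omega'(0)=2/\delta\to\infty$ are essential. A secondary technical point is keeping the curvature contribution from $D^2 d$ negligible, which is ensured by restricting attention to a thin neighbourhood $\{d<\delta\}$ of the boundary, together with the standard error terms $||q^x|^\alpha-|q^y|^\alpha|$ treated as in the proof of Theorem \ref{theolip}.
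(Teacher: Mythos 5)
Your architecture is genuinely different from the paper's: the paper builds an explicit ${\cal C}^2$ barrier $w=\log(1+Cd)+\psi$ (with $C=2/\delta$, $d$ the distance to the hypersurface, $\psi$ the Lipschitz, ${\cal C}^2$ extension of $\varphi$ solving ${\cal M}^+(D^2\psi)=0$, plus a cubic localizer near $|x|=1$), checks that $w$ is a supersolution of the extremal Pucci equation, and concludes by the comparison principle, Theorem \ref{comp}. You instead double variables against the boundary datum itself, penalizing with $M\omega(|x-\pi(y')|)$, and this route has a genuine gap at the final step. Since $\varphi(y')$ solves no equation (it is merely Lipschitz on $B'$), the Ishii--Lions cancellation that tames the tangential part of the penalization in Theorem \ref{theolip} --- there one bounds $X+Y$ by testing the matrix inequality with vectors $(\xi,\xi)$, which annihilates the off-diagonal blocks --- is unavailable here; all you can extract is $X\le D^2_x\bigl(M\omega(|x-\pi(\bar y')|)\bigr)+2LI+\dots$. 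That Hessian equals $M\bigl[\frac{\omega'(t)}{t}(I-e\otimes e)+\omega''(t)\,e\otimes e\bigr]$ with $t=|\bar x-\pi(\bar y')|$: besides the good radial eigenvalue $M\omega''(t)\sim -M/\delta^2$ it carries $N-1$ positive eigenvalues $M\omega'(t)/t\sim M/(\delta t)$. At a maximum point one only knows $t\lesssim\delta$ (from $M\omega(t)\le 1+\|\varphi\|_\infty$), with no lower bound on $t$; hence for $t\le A(N-1)\delta/(2a)$ one gets ${\cal M}^+(X)\ge A(N-1)M\omega'(t)/t-aM|\omega''(t)|\ge0$, and the subsolution inequality yields no contradiction. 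Your display $a\,c\,M^{\alpha+1}(\omega')^{\alpha+2}\le\dots$ silently discards these tangential terms, which are not ``lower order errors from $D^2d$'' (your test function never involves $d$) and in fact dominate the good term precisely in the regime that matters.

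The cure is the paper's: use the profile $\log(1+Cd(x))$ built on the distance function, whose Hessian $\frac{CD^2d}{1+Cd}-\frac{C^2\nabla d\otimes\nabla d}{(1+Cd)^2}$ has tangential part bounded by $C|D^2d|_\infty=O(1/\delta)$ (the curvature of the ${\cal C}^2$ hypersurface), hence negligible against the radial term $-C^2/(1+Cd)^2\sim-1/\delta^2$; and replace the nonsmooth $\varphi(y')$ by its interior extension $\psi$ with $|\nabla\psi|\le K\,{\rm Lip}\,\varphi$, so that $w=\log(1+Cd)+\psi$ is a genuine supersolution dominating $u$ on the whole boundary of the thin set $\{d<\delta\}$, after which the one-function comparison argument finishes the proof. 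Your boundary-exclusion step and the rescaling for $\beta=\alpha+2$ are correct as far as they go, but they cannot rescue the Hessian estimate.
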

 \begin{proof} 
{\bf First case.} We suppose that $\beta < \alpha+2$. 
We  also write the details of the proof for $\varphi=0$.  
The changes to bring in the case $\varphi\neq 0$ 
will be given at the end of the proof, the detailed calculation being left to the reader. 

It is  sufficient to consider the set where $ d(x)  < \delta$ since the assumption  
$\|u\|_\infty\leq 1$ implies the result elsewhere. 

We begin by choosing  $\delta< \delta_1$,  such that on $d(x) < \delta_1$  the distance 
is ${\cal C}^2$ and  satisfies 
$|D^2 d|\leq C_1$.  We shall also later choose  $\delta$ smaller depending  of 
$(a, A, \|f\|_\infty, \|b\|_\infty  ,  N)$.

In order to use the comparison principle we want to construct $w$ 
a super solution of 
\begin{equation}\label{ww}
-| \nabla w|^\alpha  {\cal M}^+ (D^2 w)-b|\nabla w|^\beta  \geq \|f\|_\infty,\   {\rm in} \  B \cap \{ x_N > \eta(x^\prime),\ d(x)< \delta\}
\end{equation}
such that $w\geq u$ on $\partial (B \cap \{ x_N > \eta(x^\prime),\ d(x)< \delta\})$.

We then suppose that $\delta < { 1-r\over 9}$, define $C = {2\over \delta}$  and 

$$w(x) = \left\{ \begin{array}{lc}
           \log ( 1+ C d)&  {\rm for} \  |y  |  < r\\
              \log ( 1+ C d)+ {1\over  (1-r)^3} (|x|- r)^3&\ {\rm for} \  1\geq  | x | \geq r.
             \end{array}\right.$$
In order to prove the boundary condition, let us observe that, 

 on $\{ d(x)=\delta\} $,
$w\geq  \log 3  \geq 1\geq u$, 

on  $\{|x | = 1\}\cap\{d(x) < \delta\}$, $w \geq {1\over (1-r)^3}(1-r)^3 \geq u $ and finally

on $B\cap \{ x_N = \eta (x^\prime)\}$, $w\geq 0 = u$.

\noindent We need to check that $w$ is a super solution.
For that aim,  we compute 
  $$\nabla w =\left\{ \begin{array}{lc}
  {C \over 1+ Cd} \nabla d & {\rm  when} \ |x|< r\\
   {C \over 1+ Cd} \nabla d +   {x \over |x| }{ 3\over (1-r)^3}(|x|-r)^2 & {\rm  if} \ |x| > r.
  \end{array}\right. $$

Note that $|\nabla  w|\geq{ C \over 2( 1+ C d) }\geq    {1\over 3\delta}$ since $\delta\leq\frac{1-r}{9}$.  One also has 
$ | \nabla w | \leq {3C \over 2(1+ C d)} $. 
By construction $w$ is ${\cal C}^2$ and
$$D^2 w ={ CD^2 d \over 1+ Cd} -{C^2 \nabla d \otimes \nabla d \over (1+Cd)^2}+ 
  H(x)$$
where $|H(x)|\leq {6\over (1-r)^2}+\frac{3N}{r(1-r)}$. 
In particular   
\begin{eqnarray*}
-{\cal M}^+ ( D^2 w)& \geq & a {C^2 \over (1+ C d)^2}-A{  C |D^2d |_\infty\over 1+ Cd}-  A\left({6\over (1-r)^2}+\frac{3N}{r(1-r)} \right) \\
&\geq &  a {C^2 \over (1+ Cd)^2}-A C |D^2d |_\infty-A\left({6\over (1-r)^2}+\frac{3N}{r(1-r)} \right)\\
&\geq & a {C^2 \over 4(1+ Cd)^2}
\end{eqnarray*}
 as soon as $\delta$  is small enough, depending only on $r$, $A$,$ a$. 
 
   Hence  ( we do the computations for $\alpha >0$ and leave to the reader the case $\alpha <0$, which can easily be deduced easily since 
   ${ C \over 2( 1+ C d) }\leq | \nabla w | \leq {3C \over 2(1+ C d)} $). 
    \begin{eqnarray*}
-| \nabla w |^\alpha { \cal M}^+ ( D^2 w) -b | \nabla w |^\beta &\geq &  a {C^{2+ \alpha} \over 2^{2\alpha+2} ( 1+ Cd)^{2+ \alpha} }-b \left( { C\over 2(1+Cd)}\right)^\beta \\
 &\geq &a {C^{2+ \alpha} \over 2^{4+  \alpha} ( 1+ Cd)^{2+ \alpha} }
 \geq |f|_\infty
 \end{eqnarray*}
 as soon as $\delta$ is small enough in order that 
 \begin{equation}\label{eqbsmall}b < a 2^{ \beta-2\alpha-4} \left({C \over 3}\right)^{ \alpha+2-\beta}
 \end{equation} 
  and 
  so that 
  $a {C^{2+ \alpha} \over 2^{4+  \alpha} ( 1+ Cd)^{2+ \alpha} }> |f|_\infty$.

By the comparison principle, Theorem \ref{comp}, $u\leq w$ in $B \cap \{ x_N > \eta ( x^\prime)\}\cap \{d(x)< \delta\}$. 

Furthermore  the  desired lower bound  on $u$   is easily deduced by considering 
$-w$ in place of $w$ in the previous computations and restricting to $B_r \cap \{ x_N > \eta ( x^\prime)\}$.
This  ends the case $\varphi\equiv 0$.

 To treat the case where $\varphi$ is  non zero, let $\psi$ be a solution of 
 $${ \cal M}^+ ( \psi) = 0, \ \psi = \varphi\ { \rm on} \ \partial \Omega$$
 It is known that $\psi\in { \cal C}^{2}( \Omega)$, $\psi$ is Lipschitz, $| \nabla \psi | \leq K | \nabla \varphi
|_\infty$. Then in the previous calculation it is sufficient to define 
 as soon as $\delta$ is small enough in order that 
 ${1 \over 3 \delta}
> 2 K | \nabla \varphi|_\infty$,
$$w(y) = \left\{ \begin{array}{lc}
           \log ( 1+ C d)+ \psi(x)&  {\rm for} \  |x  |  < r\\
              \log ( 1+ C d)+ {1\over  (1-r)^3} (|x|- r)^3+ \psi(x)&\ {\rm for} \   | x | \geq r.
             \end{array}\right.$$
                           And we choose $C= {2 \over  \delta} $ large enough in order that 
               ${ C \over  2} > 2(| \nabla \psi |_\infty + {1\over 1-r})$ and also large enough in order that 
                $-| \nabla w |^\alpha { \cal M}^+ ( D^2 w) -b | \nabla w |^\beta > |f|_\infty$ which can be easily done by the same argument as before,  using 
             \begin{eqnarray*}
             -| \nabla w |^\alpha { \cal M}^+ ( D^2 w) -b | \nabla w |^\beta  &\geq &-2^{-| \alpha|}  | \nabla (   \log ( 1+ C d)|^\alpha  { \cal M}^+  (   \log ( 1+ C d)) \\
             &-& b 2^{-\beta } |\nabla (   \log ( 1+ C d)|^\beta. 
             \end{eqnarray*}

\noindent{\bf Second case} Suppose now that $\beta  = \alpha +2$. 
 It is sufficient to construct a convenient super-solution  $w$. 
             
Recall that the previous proof used (\ref{eqbsmall}), which reduces when  $\alpha+2 = \beta$, to the condition 
$b < a 2^{ \beta-2\alpha-4}$. Let then $\epsilon = {a 2^{ \beta-2\alpha-4}  \over b}$, and let $w$ which  equals ${\varphi\over \epsilon} $ on  the boundary, and satisfies 
           $$ - |\nabla w |^\alpha { \cal M}^+ ( D^2 w) -\epsilon b | \nabla w |^\beta \geq \epsilon ^{1+ \alpha}|f|_\infty $$
            Then we obtain by the comparison principle that 
            ${ u\over \epsilon}  \leq w$ and then $u \leq \epsilon w$.   
           
           \end{proof}

 This enables us to prove   the following  Lipschitz estimate up to the boundary.
  \begin{prop}\label{lipbo} Let $\varphi$ be a Lipschitz continuous function on the part $T = B(0,1) \cap \{ x_N = \eta(x^\prime)\}$, that $f \in { \cal C} ( \overline{ \Omega})$ and $b$ is Lipschitz continuous.   
Suppose that $u$ and $v$ are respectively sub and supersolution  which  satisfy (\ref{eqqqfi}) in $B(0,1) \cap \{ x_N > \eta(x^\prime)\}$, with 
$u = \psi=  v$ on $T$. 

Then, for all $r<1$,   there exists
$c_r>0$   depending on $r,a, A, {\rm Lip}(b)$ and $N$  such that,  for all $x$ and $y$ in $B_r \cap \{ x_N \geq  \eta(x^\prime)\})$,
$$ u(x) -v(y) \leq  \sup_{B_1 \cap \{ y_N \geq  \eta(y^\prime)\})} (u-v) + c_r \left( |f|_\infty^{1\over 1+ \alpha} + |u|_\infty + |\psi|_{ W^{1, \infty} (T)}\right)|x-y| $$
In particular, when $u$ is a solution we have a Lipschitz local estimate up to the boundary. 
\end{prop}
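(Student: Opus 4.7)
The plan is to combine the interior Ishii--Lions doubling-of-variables method used in Theorem \ref{theolip} with the barrier estimate from Lemma \ref{lemboundary}. Set $\Omega_+ := B(0,1)\cap\{x_N>\eta(x')\}$ and, after a preliminary normalization replacing $u,v$ by $u/(|u|_\infty+|\psi|_{W^{1,\infty}}+|f|_\infty^{1/(1+\alpha)}+1)$ and similarly for $v$ so as to track the claimed dependence, fix $x_o\in B_r\cap\overline{\Omega_+}$ and introduce the auxiliary function
$$\phi(x,y)=u(x)-v(y)-\sup_{\Omega_+}(u-v)-M\omega(|x-y|)-L\bigl(|x-x_o|^2+|y-x_o|^2\bigr),$$
with $\omega$ the concave function from \eqref{omega}, $L\sim(|u|_\infty+|v|_\infty+1)/(1-r)^2$ chosen so that $\phi\le 0$ as soon as $|x|$ or $|y|$ exceeds $(1+r)/2$, and $M$ to be fixed. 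The goal is to show $\phi\le 0$ on $\overline{\Omega_+}^2$; specializing to $x=x_o$ and letting $x_o$ vary then yields the desired Lipschitz bound.

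Assume by contradiction that $\sup\phi>0$ is attained at some $(\bar x,\bar y)\in B_{(1+r)/2}^2\cap\overline{\Omega_+}^2$, with $|\bar x-\bar y|$ forced to be small once $M$ is large. I would then distinguish two cases. In the \emph{interior case}, when both $\bar x$ and $\bar y$ lie in $\Omega_+$, the construction is exactly the one of Theorem~\ref{theolip}: Ishii's lemma produces jets with matrices $X,Y$ satisfying \eqref{eqXYB} for $B$ built from $\omega'$ and $\omega''$, the ellipticity inequality yields the dominating negative term $-c\delta^{-(1+\alpha)}|\bar x-\bar y|^{\tau-1}$ as in \eqref{vpneg}, and all the lower-order contributions (including those from Lipschitz dependence of $b$) are absorbed exactly as before, contradicting the boundedness of $f$ and $g$.

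In the \emph{boundary case}, suppose $\bar x\in T$ (the case $\bar y\in T$ is symmetric). Then $u(\bar x)=\psi(\bar x)$, and since $M$ is large, $|\bar x-\bar y|\le\delta_0$ where $\delta_0$ is the constant from Lemma~\ref{lemboundary}. Applying the lower-bound (supersolution) version of that lemma to $v$ near $T$ and using $\log(1+t)\le t$ gives
$$v(\bar y)\ge\psi(\pi(\bar y))-\tfrac{2}{\delta_0}\,d(\bar y),$$
where $\pi$ denotes the normal projection onto $T$. Since $\bar x\in T$, $d(\bar y)\le|\bar x-\bar y|$ and $|\bar x-\pi(\bar y)|\le 2|\bar x-\bar y|$, so Lipschitz continuity of $\psi$ yields $u(\bar x)-v(\bar y)\le\bigl(2\,\mathrm{Lip}(\psi)+2/\delta_0\bigr)|\bar x-\bar y|$. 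Using $\omega(s)\ge s/2$, one obtains $\phi(\bar x,\bar y)\le\bigl(2\,\mathrm{Lip}(\psi)+2/\delta_0-M/2\bigr)|\bar x-\bar y|<0$ as soon as $M$ is chosen larger than $4\,\mathrm{Lip}(\psi)+4/\delta_0$, a contradiction.

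The main obstacle is book-keeping of the constants: $\delta_0$ from Lemma~\ref{lemboundary} depends on $|u|_\infty$, $|\psi|_{W^{1,\infty}}$ and $|f|_\infty$, and the required $M$ in the boundary case must dominate $1/\delta_0$, while in the interior case $M$ must also be large enough to absorb $|f|_\infty^{1/(1+\alpha)}$ (as in Theorem~\ref{theolip}, where the contradiction is sharp in $\delta^{-(1+\alpha)}|\bar x-\bar y|^{\tau-1}$). Both requirements can be met independently, since neither $\delta_0$ nor the interior threshold depends on $M$; selecting $M$ as the sum of the two thresholds and reverting the normalization gives the stated linear dependence of $c_r$ on $|f|_\infty^{1/(1+\alpha)}+|u|_\infty+|\psi|_{W^{1,\infty}(T)}$.
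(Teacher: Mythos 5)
Your proposal is correct and follows essentially the same route as the paper's proof: both exclude maximum points on $T$ by playing the barrier of Lemma \ref{lemboundary} against the lower bound $M\omega(|\bar x-\bar y|)\ge \frac{M}{2}|\bar x-\bar y|$ (the paper does this for $\bar y\in T$ via the subsolution barrier for $u$, you do it for $\bar x\in T$ via the symmetric supersolution barrier for $v$ --- the two are interchangeable), and then run the interior Ishii--Lions argument of Theorem \ref{theolip} unchanged. The only point to watch is your preliminary normalization: since the zeroth- and first-order terms scale with different homogeneities than $F$ when $\beta\neq\alpha+1$, dividing $u,v$ by a constant changes the coefficient $b$, so the dependence of $c_r$ on the data is more safely tracked directly through the choices of $M$ and $\delta_0$, as you in fact indicate in your final paragraph.
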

  \begin{proof}
  We must prove    H\"older's case  and deduce from it the Lipschitz one as  in the proof of Theorem \ref{theolip} . We do not give the details, in  each of the H\"older's and Lipschitz case,  we define in $B(0, 1) \cap \{ x_N > \eta(x^\prime)\}$
                 $$\phi(x, y) = u(x) - v(y) -\sup_{B_1\cap  \{ x_N > \eta(x^\prime)\}} ( u-v) -M\omega (|x-y|) -L |x-x_o|^2-L |y-x_o|^2,$$
                  where $\omega$ is as in the proof of Theorem \ref{theolip}.
              We want to prove that $\phi \leq 0$, which will classically imply the result.   We argue by contradiction  and  need to prove first  that if $( \bar x, \bar y)$ is a maximum point for $\phi$, then  neither $\bar x$ nor $\bar y$ belongs to $x_N = \eta(x^\prime)$. We notice that,  if $\bar y \in \{ x_N = \eta(x^\prime)\}$, then
              \begin{eqnarray*}
              u(\bar x)-\psi ( \bar y) &\geq& \sup_{B_1\cap  \{ x_N > \eta(x^\prime)\}} ( u-v) + M\omega (|x-y|) \\
              &\geq& u( \bar y)-v( \bar y)+ M \omega ( | \bar x-\bar y|)
                       \end{eqnarray*}
 which contradicts  Lemma  \ref{lemboundary}  for $M$ large enough.  
The case when $\bar x \in \{ x_N= \eta(x^\prime)\}$ is excluded in the same manner, and the rest of the proof follows the lines of Theorem \ref{theolip}
  \end{proof} 
\begin{rema} {\rm  When the boundary condition is prescribed on the whole boundary,   we have a simpler proof, as we noticed in the homogeneous case, taking 
                $$\phi(x,y) = u(x) - v(y) -\sup_{B_1\cap  \{ x_N > \eta(x^\prime)\}} ( u-v) -M\omega (|x-y|)\, . $$
In this case the localizing terms are not needed, since  it is immediate to exclude that the 
maximum point $(\bar x , \bar y)$ is on the boundary. }
                 \end{rema}

\textbf{Acknowledgment}.
Part of this work  has been done while  the first and third authors were visiting the University of Cergy-Pontoise and 
the second one was visiting  Sapienza University of Rome, supported by INDAM-GNAMPA and Laboratoire AGM Research Center in Mathematics of the University of Cergy-Pontoise.


\begin{thebibliography}{99}
  \bibitem{BCI} G. Barles,  E. Chasseigne, C. Imbert, {\it H\"older continuity of solutions of second-order non-linear elliptic integro-differential equations}, J. Eur. Math. Soc. 
{\bf 13} (2011),  1-26. 
\bibitem{BDa} G. Barles, F. Da Lio, {\em On the generalized Dirichlet problem for viscous Hamilton--Jacobi equations},  J. Math. Pures Appl. (9) {\bf 83} (2004), no. 1, 53--75.
\bibitem{BD1}I. Birindelli, F. Demengel \emph{First eigenvalue and Maximum principle for fully nonlinear singular
operators},   Advances in Differential equations, 
 \textbf{11} (1) (2006), 91-119. 
\bibitem{BDcocv}   I. Birindelli, F. Demengel, \emph{  ${\mathcal C}^{1,\beta}$ regularity for Dirichlet problems associated to fully nonlinear degenerate elliptic 
equations},  ESAIM  COCV,   \textbf{20} (40) (2014), 1009-1024.  
\bibitem{BDL}   I. Birindelli, F. Demengel, F. Leoni, { Ergodic pairs for singular or degenerates fully nonlinear operators}  arXiv:1712.02671 
\bibitem{CDLP}I. Capuzzo Dolcetta, F. Leoni, A. Porretta, {\em H\"older's estimates for degenerate elliptic equations with coercive Hamiltonian}, Transactions of the American Mathematical Society \textbf{362}, n.9,  (2010), 4511-4536. 
\bibitem{usr} { M.G. Crandall, H. Ishii, P.L. Lions, } {\em User's guide to
viscosity solutions of second order partial differential equations}, Bull.
Amer. Math. Soc. (N.S.) \textbf{27} (1992), no. 1, 1-67.
\bibitem{IS} C. Imbert, L. Silvestre, \emph{$C^{1,\alpha}$ regularity of solutions of some degenerate fully nonlinear elliptic equations}, Adv. Math. \textbf{233} (2013), 196--206.
\bibitem{I1} {H. Ishii},  \emph{ Viscosity solutions of nonlinear partial differential equations}, Sugaku Expositions 1996,  144-151. 
\bibitem{IL} H. Ishii, P.L. Lions  {\em Viscosity solutions of fully-nonlinear second order elliptic partial differential equations}, J. Differential Equations \textbf{83} (1990), 26-78.
\end{thebibliography}
          \end{document}